\documentclass[smallextended,final]{svjour3}       
\usepackage[compatibility=false]{caption}

\usepackage{amsmath}
\usepackage{amssymb}
\usepackage{amsmath}
\usepackage{algorithm}
\usepackage{graphicx}
\usepackage[noend]{algpseudocode}
\usepackage{mathtools} 
\usepackage{hyperref}
\usepackage{subcaption} 
\usepackage{multicol}

\usepackage{scalerel}   
\input{insbox}

\usepackage{geometry}
\geometry{left=2.5cm, right=2.5cm, top=2cm, bottom=2.5cm}

\DeclareCaptionType[within=section]{mat}[Polytope]

\def\CF{\mathcal{F} }
\def\CC{\mathcal{C} }
\newtheorem{open}{Question:}

\newenvironment{proofof}[1]{ \par\noindent{\it{Proof of #1.}}\ }

\usepackage{pgffor}
\foreach \x in {A,...,Z}{%
	\expandafter\xdef\csname b\x\endcsname{\noexpand\ensuremath{\noexpand\mathbf{\x}}}
	\expandafter\xdef\csname c\x\endcsname{\noexpand\ensuremath{\noexpand\mathcal{\x}}}
	\expandafter\xdef\csname B\x\endcsname{\noexpand\ensuremath{\noexpand\mathbb{\x}}}
}

\foreach \x in {a,...,z}{%
	\expandafter\xdef\csname b\x\endcsname{\noexpand\ensuremath{\noexpand\mathbf{\x}}}
}


\newcommand{\MaxBond}{{\sc Max-Bond }}
\newcommand{\MaxCut}{{\sc Max-Cut }}
\newcommand{\MB}{\ensuremath{\mathrm{MAXB}}}
\newcommand{\bondp}[1]{\ensuremath{\mathrm{BOND}\left(#1\right)}}
\newcommand{\abondp}[2]{\ensuremath{\mathrm{BOND}\left(#1,#2\right)}}
\newcommand{\conv}[1]{\ensuremath{\mathrm{conv}\left(#1\right)}}
\newcommand{\ksum}[1]{\ensuremath{\oplus_{#1}}}
\newcommand{\xc}[1]{\ensuremath{\mathrm{xc}\left(#1\right)}}
\newcommand{\vertices}[1]{\ensuremath{\mathrm{vert}\left(#1\right)}}
\newcommand{\Oh}[1]{\ensuremath{\cO}\left(#1\right)}
\newcommand{\prism}{\textit{Prism}}
\newcommand{\vertexset}{\mathop{\mathrm{vert}}}

\newcommand{\osfl}{\textsf{Best-So-Far-ind-L}}
\newcommand{\absfl}{\textsf{BSFL}}

\newcommand{\osf}{\textsf{Best-So-Far}}
\newcommand{\absf}{\textsf{BSF}}
\newcommand{\osfr}{\textsf{Best-So-Far-ind-R}}
\newcommand{\absfr}{\textsf{BSFR}}

\newcommand{\po}{\textsf{Suffix-Best}}
\newcommand{\asb}{\textsf{SB}}
\newcommand{\asbl}{\textsf{SBL}}
\newcommand{\pol}{\textsf{Suffix-Best-ind-L}}
\newcommand{\oc}{\textsf{New-Best-Candidate}}

\newcommand{\pbl}{\textsf{Prefix-Best-Left}}
\newcommand{\pbr}{\textsf{Prefix-Best-Right}}
\newcommand{\pbli}{\textsf{Prefix-Best-Left-ind}}
\newcommand{\pbri}{\textsf{Prefix-Best-Right-ind}}
\newcommand{\plk}{\textsf{Prefix-Left}}
\newcommand{\prk}{\textsf{Prefix-Right}}
\newcommand{\bestsol}{\textsf{Best-Solution}}

\newcommand\utimes{\mathbin{\ooalign{$\cup$\cr%
   \hfil\raise0.42ex\hbox{$\scriptscriptstyle\times$}\hfil\cr}}}
\newcommand\bigutimes{\mathop{\ooalign{$\bigcup$\cr%
   \hfil\raise0.36ex\hbox{$\scriptscriptstyle\boldsymbol{\times}$}\hfil\cr}}}
\newcommand{\thinbar}[1]{\scaleto{\overline{\mkern-2mu#1\mkern-1mu}\mkern-1mu}{4.5pt}}

\makeatletter
\makeatother

\begin{document}
\title{Bond Polytope under Vertex- and Edge-sums}
\author{Petr Kolman\thanks{Partially supported by grant 24-10306S of GA ČR.} \and Hans Raj Tiwary \thanks{Partially supported by the AGATE project funded  from the Horizon Europe Programme under Grant Agreement No. 101183743.}}
\institute{P. Kolman \and H. R. Tiwary \at Charles University, Faculty of Mathematics and Physics,
Department of Applied Mathematics,
Prague, Czech Republic,
\email{\{kolman,hansraj\}@kam.mff.cuni.cz}}
\authorrunning{P. Kolman, H. R. Tiwary}

\date{ }
\maketitle
\begin{abstract}   
A cut in a graph $G$ is called a {\em bond} if both parts of the cut induce
connected subgraphs in $G$, and the {\em bond polytope} is the convex hull of
all bonds. Computing the maximum weight bond is an NP-hard problem even for
planar graphs. However, the problem is solvable in linear time on $(K_5 \setminus
e)$-minor-free graphs, and in more general, on graphs of bounded treewidth,
essentially due to clique-sum decomposition into simpler graphs. 

We show how to obtain the bond polytope of graphs that are $1$- or $2$-sum of
graphs $G_1$ and $ G_2$ from the bond polytopes of $G_1,G_2$. Using this we show
that the extension complexity of the bond polytope of $(K_5 \setminus
e)$-minor-free graphs is linear. Prior to this work, a linear size description
of the bond polytope was known only for $3$-connected planar $(K_5 \setminus
e)$-minor-free graphs, essentially only for wheel graphs.

We also describe an elementary linear time algorithm for the \MaxBond problem on
$(K_5\setminus e)$-minor-free graphs. Prior to this work, a linear time
algorithm in this setting 
was known. 
However, the hidden constant in the big-Oh notation was large because the algorithm relies on the heavy machinery of linear time algorithms for graphs of bounded treewidth, used as a black box.

\keywords{Maxcut with connectivity constraints \and
$K_5\setminus e$-minor-free graphs \and
Maxbond \and
Bond polytope \and
Extended formulations}
\end{abstract}

\section{Introduction}
The \MaxCut problem is a fundamental problem in computer science and is one of Karp's original 21 NP-Complete problems \cite{Karp:72}. Given a graph $G=(V,E)$ the problem asks for a subset $S\subseteq V$ of vertices such that the number of edges with exactly one endpoint in $S$ is as large as possible. However, in some applications such as image segmentation \cite{VKR:08}, forest planning and harvest scheduling~\cite{CCGVW:13}, and certain market zoning \cite{GKLSZ:19}, one 
imposes an additional condition that both components $G[S]$ and $G[V\setminus S]$ be {\em connected}. This version of \MaxCut has been studied by various authors \cite{EHKK:19,DEHKKLPSS:21,GHKPS:18,Hajiaghayietal:20,CJN:23,Chaourar:20} under different names, but following Duarte et al.~\cite{DLPSS:19} and Chimani et al.~\cite{CJN:23} we will refer to this as the \emph{bond problem.}

Formally, given a graph $G=(V,E)$ a {\em bond} in $G$ is 
a cut $(S,V\setminus S)$ such that the induced subgraphs $G[S]$ and $G[V\setminus S]$ are both connected; $S$ and 
$V\setminus S$ are two {\em sides} of the bond. 
Note that bonds of a connected graph $G$ are the minimal edge cuts of $G$. 
The \MaxBond problem seeks to find a bond $(S,V\setminus S)$ such that the number of edges between $S$ and $V\setminus S$ is maximized. 
For each bond in a graph $G$, we consider the characteristic vector of its edges; 
for simplicity, we do not distinguish between a bond, the edges in a bond, and the characteristic vector of the edges in a bond unless the meaning is not clear in the context of the discussion.

For a graph $G$ its bonds are circuits of the co-graphic matroid of $G$~\cite{Oxley:01}. Co-graphic matroids form an essential ingredient in the decomposition result for regular matroids by Seymour \cite{seymour:80}. Regular matroids are known to have polynomial extension complexity \cite{AF:22}.

In this paper, we deal with the \MaxBond problem on $(K_5\setminus e)$-minor-free graphs.
A linear time algorithm for bounded-treewidth graphs was given by Duerte et al. \cite{DLPSS:19}. More specifically, for $(K_5\setminus e)$-minor-free graphs, Chaourar~\cite{Chaourar:20} gave a quadratic time algorithm. 
Chimani et al. \cite{CJN:23} improved this result by giving a linear time algorithm;
the algorithm uses as the black box a linear time algorithm of Duarte et 
al.~\cite{DLPSS:19} for the bond problem on bounded tree-width graphs,  
and this black-box is used to get the maximum bond for the wheel graphs $W_n$ in combination with a divide-and-conquer like strategy. 
Chimani et al.~\cite{CJN:23}
also gave a characterisation of the bond polytope for $3$-connected planar $(K_5\setminus e)$-minor-free graphs by 
giving a linear size set of linear inequalities defining it; by a result of Wagner \cite{Wagner:60}, this class of graphs contains only the wheel graphs $W_n$, the triangle $K_3$, and the triangular prism. 
The question of describing the bond polytope for general $(K_5\setminus e)$-minor-free graphs
was left open.

Our contributions are twofold:
\begin{enumerate}
    \item 
    We show how to obtain linear size descriptions of the bond polytope of graphs that are $k$-sum, for $k = 1, 2$, of other graphs with known linear size bond polytope.
    Using this result, we prove that the extension complexity of the bond polytope is linear for arbitrary $(K_5\setminus e)$-minor-free graphs.  
    This, in a sense, is the best one can do because -- as we note later (Lemma~\ref{lem:1sum-vertices} and the subsequent remark)
    -- the actual description of the bond polytope even for $(K_5\setminus e)$-minor-free graphs can be exponential in the size of the graph. This answers an open question posed by Chimani et al. \cite{CJN:23}.
    \item We simplify the algorithmic result for the \MaxBond problem of Chimani et al. \cite{CJN:23} by giving a simple linear time algorithm for the wheel graph, removing the need to use the tree-width machinery~\cite{EHKK:19}, which yields a linear time algorithm for
    the \MaxBond problem for all $(K_5\setminus e)$-minor-free graphs. Chimani et al. \cite{CJN:23} mention the possibility of existence of algorithms simpler than theirs so our algorithm can be seen as an answer to their question.
\end{enumerate}

It should be noted that $(K_5\setminus e)$-minor-free graphs have bounded treewidth and bonds can be represented by a formula in Monadic Second Order (MSO) logic. So both a linear time algorithm as well as a linear size extended formulation follow readily from meta results about bounded treewidth graphs: the algorithmic results follow from the work of Courcelle~\cite{Courcelle:90} and are given explicitly by Duarte et al.~\cite{DLPSS:19}, while the polyhedral results follow from the work of Kolman et al.~\cite{KolmanKT:20}. However, 
the magnitude of the constants in both cases is enormous~\cite{LangerRRS:14}, in contrast
to the constants in our results.

\paragraph{Other Related Results}
The cut polytope for clique-sums of size three was studied by Barahona \cite{BarahonaM:86} who gave an efficient algorithm and extended formulation for the cut polytope of $K_5$-minor-free graphs.

A closely related problem is the version of the \MaxCut in which only the $S$ part is required to be
connected. This version of the \MaxCut problem is NP-hard~\cite{Hajiaghayietal:20} as well.
Schieber and Vahidi~\cite{Schieberv:23} gave an $O(\log\log n)$-approximation
improving an earlier $O(\log n)$-approximation~\cite{Hajiaghayietal:20}.

In contrast to the \MaxCut problem, there is no constant-factor approximation algorithm 
for \MaxBond unless P=NP~\cite{DEHKKLPSS:21}. On the positive side, both \MaxBond and the
version of \MaxCut with one side connected are fixed-parameter tractable when parameterized 
by the size of the solution, the treewidth, and the twin-cover number~\cite{DEHKKLPSS:21}.

\section{Preliminaries}
Let $G_1$ and $G_2$ be two graphs and $U_1 \subseteq V(G_1)$ and
$U_2 \subseteq V(G_2)$ two subsets of vertices inducing a clique of the
same size, say size $k$, for some $k\geq 1$. 
A graph $G$ is a {\em clique-sum}
of $G_1$ and $G_2$ if $G$ is obtained from $G_1$ and $G_2$ by identifying $U_1$ 
and $U_2$, and possibly removing some edges from the clique. 

In this paper, we
use the clique-sums for $k=1,2$. To distinguish between the $2$-sum
that keeps the edge in the clique, and the $2$-sum that removes it, we
denote the former operation by $\ksum{2}$ and the later by $\ksum{2}^{-}$.
If we want to emphasize that $G_1\ksum{1}G_2$ is taken over a vertex $v$, we will denote it as $G_1\ksum{v}G_2$. Similarly, $G_1\ksum{e}G_2$ or $G_1\ksum{e}^-G_2$ will be 
used to mean that the $2$-sum of $G_1$ and $G_2$ is taken over the edge $e$.

For a graph $G=(V,E)$, a pair of vertices $uv$ is called a {\em non-edge} if $uv\not \in E$.
For an edge $e\in E$, by $G\setminus e$ we denote the graph $(V,E\setminus \{e\})$,
and for $e\notin E$, by $G\cup\{e\}$ we denote the graph $(V,E\cup e)$, and
we use $uv$ as an abbreviation of $\{u,v\}$.
In the case of (edge) weighted graphs, the weight of an edge $uv$ is denoted $w(u,v)$.
For a subset $S$ of vertices, $\delta(S)$ is the set of edges between $S$ and $V\setminus S$.

A graph $H$ is a {\em minor} of a graph $G$ if $H$ can be obtained from $G$ by 
a series of vertex and edge deletions and edge contractions. A graph $G$ is
$H$-{\em minor-free} if $H$ is not a minor of $G$.

For $n\geq 3$, a {\em wheel graph} $W_n$ is a graph with a vertex set 
$V=\{0,1,\ldots,n-1\}\cup\{c\}$, for $c\not\in \{0,1,\ldots,n-1\}$,
and an edge set $E=\bigcup_{i=0}^{n-2}\{\{i,i+1\},\{i,c\}\} \cup\{\{n-1,0\},\{n-1,c\}\}$;
the vertex $c$ is called the {\em hub} of the wheel, the cycle $0,1,\ldots,n-1,0$ is
the {\em rim}, and the edges of the form $\{i,c\}$ are the {\em spokes} of the wheel.
For integers $i<j$, let $[i,j]$ denote the set $\{i,i+1,\ldots,j\}$.
 
The {\em Prism} graph is the cartesian product of a $K_3$ with a single edge.

\begin{theorem}[Satz 7, Wagner~\cite{Wagner:60}]\label{thm:wagner}
Each maximal 
$(K_5\setminus e)$-minor-free graph $G$ can be decomposed as 
$G=G_1 \oplus^1 \dots \oplus^{l-1} G_\ell$ where each
$G_i$ is isomorphic to a wheel graph, $\prism$, $K_2$, $K_3$, or $K_{3,3}$,
and each operation $\oplus^i$ is $\oplus_1$ or $\oplus_2$. 
\end{theorem}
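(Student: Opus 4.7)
I would proceed by induction on $|V(G)|$, splitting into cases according to the vertex-connectivity of $G$. The base case $|V(G)|\leq 4$ follows by direct enumeration ($K_2$, $K_3$, and $K_4 = W_3$ are the maximal examples). For the inductive step, let $G$ be a maximal $(K_5\setminus e)$-minor-free graph on $n\geq 5$ vertices. First observe that $G$ is connected: since $K_5\setminus e$ has no bridge and no cut vertex, a single new edge between two components of a disconnected graph cannot create a $K_5\setminus e$ minor, which would contradict maximality.

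Suppose $G$ has a cut vertex $v$ and write $G = G_1 \cup G_2$ with $V(G_1) \cap V(G_2) = \{v\}$. Both summands are $(K_5\setminus e)$-minor-free, and each is maximal on its own vertex set: a missing edge inside $G_i$ whose addition keeps $G_i$ minor-free would also keep $G$ minor-free, because in any would-be $K_5\setminus e$ minor of $G$ the branch sets must all lie in one side plus at most one straddling $v$, and removing that straddling branch set would disconnect $K_5\setminus e$, contradicting its $3$-connectivity. Hence $G = G_1 \ksum{v} G_2$, and induction applies. If instead $G$ is $2$-connected but has a $2$-separation $\{u,v\}$, the analogous argument based on the $3$-connectivity of $K_5\setminus e$ yields $G = G_1 \ksum{2} G_2$ or $G = G_1 \ksum{2}^{-} G_2$ according to whether $uv \in E(G)$, with each summand maximal $(K_5\setminus e)$-minor-free on its own vertex set.

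The remaining case is when $G$ is $3$-connected, and this is the main obstacle: one must classify the maximal $3$-connected $(K_5\setminus e)$-minor-free graphs as exactly $K_3$, $K_{3,3}$, $\prism$, and the wheels $W_n$. The classification is the crux of Wagner's original proof. A natural approach is to pick a vertex $v$ of minimum degree, note that $G-v$ is $2$-connected and $(K_5\setminus e)$-minor-free, and then argue that the forbidden minor severely constrains how $v$ can attach to a long cycle of $G-v$: if $v$ is adjacent to three suitably spread vertices of an induced cycle of length at least $4$ in $G-v$, a $K_5\setminus e$ minor typically appears, forcing either $v$ to dominate a Hamiltonian cycle (yielding a wheel) or the attachment pattern to produce one of the two sporadic graphs $\prism$ and $K_{3,3}$. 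A careful case analysis on $\deg(v)$ and on the combinatorics of $v$'s neighbourhood in $G-v$ finishes the classification. The $1$- and $2$-sum reductions above are comparatively routine bookkeeping once this structural classification is in hand.
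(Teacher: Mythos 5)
The paper does not actually prove this statement: it is quoted as Satz~7 of Wagner~\cite{Wagner:60}, and when the authors later need the underlying classification (in the proof of Theorem~\ref{thm:decomp}) they again cite the literature for the fact that the only $3$-connected $(K_5\setminus e)$-minor-free graphs are wheels, \prism{}, $K_2$, $K_3$ and $K_{3,3}$. Your proposal has its genuine gap at exactly that point. The reduction along cut vertices and $2$-separations is routine bookkeeping (and your maximality-of-summands argument is essentially right), but the whole content of Wagner's theorem is the classification of the $3$-connected case, and there you offer only a sketch: ``a $K_5\setminus e$ minor \emph{typically} appears'', ``a careful case analysis \dots finishes the classification''. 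Nothing in the sketch explains why the attachment analysis cannot produce further sporadic graphs, why a minimum-degree vertex must end up dominating a Hamiltonian cycle rather than a shorter one, or how \prism{} and $K_{3,3}$ --- which do contain induced cycles of length $\geq 4$ with a degree-$3$ vertex attached --- evade the ``typically appears'' step. As it stands, the crux is a restatement of the goal, not a proof; citing Wagner (as the paper does) or Chimani et al.~\cite{CJN:23} would be the honest alternative.

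A secondary but concrete error: in the $2$-separation case you allow $G = G_1 \ksum{2}^{-} G_2$ when $uv\notin E(G)$, but the statement you are proving asserts that only $\oplus_1$ and $\oplus_2$ occur. For a \emph{maximal} $(K_5\setminus e)$-minor-free graph this case must be excluded, and the exclusion follows from the same $3$-connectivity argument you already use: if $\{u,v\}$ is a $2$-cut with $uv\notin E(G)$, any $K_5\setminus e$ minor of $G\cup\{uv\}$ can have the edge $uv$ replaced by a $u$--$v$ path through the side of the separation not hosting the minor, so $G\cup\{uv\}$ is still $(K_5\setminus e)$-minor-free, contradicting maximality; hence $uv\in E(G)$ and only $\ksum{2}$ arises. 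With the $\ksum{2}^{-}$ case left in, what you outline is closer to the paper's Theorem~\ref{thm:decomp} (which drops maximality and therefore needs $\ksum{2}^{-}$, and which the paper proves via Hopcroft--Tarjan block and $3$-connected-component decompositions) than to Wagner's Satz~7 as stated.
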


\begin{theorem}\label{thm:decomp}
Each $(K_5\setminus e)$-minor-free graph $G$ can be decomposed in linear time as 
$G=G_1 \oplus^1 \dots \oplus^{l-1} G_\ell$ where each
$G_i$ is isomorphic to a wheel graph, $\prism$, $K_2$, $K_3$, or $K_{3,3}$,
and each operation $\oplus^i$ is $\oplus_1$, $\oplus_2$ or $\oplus_2^-$.
\end{theorem}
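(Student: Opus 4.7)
The plan is to establish the decomposition inductively by following the block-and-triconnected structure of $G$, and then to implement the induction using linear-time algorithms for block-cut and SPQR trees. The only new feature compared to Theorem~\ref{thm:wagner} is the $\oplus_2^-$ operation, which we insert exactly when a separation pair of $G$ is not joined by an edge of $G$. Disconnected inputs are treated componentwise.

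In the inductive step, if $G$ has a cut vertex $v$ we split $G = G_1 \oplus_1 G_2$ along $v$; both $G_1, G_2$ are minors of $G$ and hence $(K_5\setminus e)$-minor-free, so induction applies. If $G$ is $2$-connected with a separation pair $\{u,v\}$, we take $G_1, G_2$ to be the two sides (each containing $u$ and $v$). The key observation is that $G_j \cup \{uv\}$ is always $(K_5\setminus e)$-minor-free: since $G$ is $2$-connected, there is a $u$-to-$v$ path on the opposite side, and contracting it (plus deleting any remaining vertices of that side) exhibits $G_j \cup \{uv\}$ as a minor of $G$. We then write $G = (G_1 \cup \{uv\}) \oplus_2 (G_2 \cup \{uv\})$ if $uv \in E(G)$ and $G = (G_1 \cup \{uv\}) \oplus_2^- (G_2 \cup \{uv\})$ otherwise, and recurse on each strictly smaller side.

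The main obstacle is the base case, where $G$ is $3$-connected with $|V(G)|\ge 4$; we need to show $G$ is isomorphic to $W_n$, $\prism$, or $K_{3,3}$. Extend $G$ greedily to a maximal $(K_5\setminus e)$-minor-free supergraph $G^+$ on the same vertex set; since adding edges cannot decrease connectivity, $G^+$ is also $3$-connected. Theorem~\ref{thm:wagner} gives $G^+$ a Wagner decomposition into wheels, $\prism$, $K_2$, $K_3$, $K_{3,3}$, but any non-trivial $\oplus_1$ or $\oplus_2$ in that decomposition would produce a cut vertex or a separation pair in $G^+$, contradicting $3$-connectedness. Hence $G^+$ itself is $W_n$, $\prism$, or $K_{3,3}$ ($K_2$ and $K_3$ are ruled out by the vertex count), and a short case check shows that removing any edge of such a graph creates a degree-$2$ vertex and therefore a separation pair; thus $G = G^+$.

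For the linear-time bound, compute the block-cut tree of $G$ using Tarjan's algorithm and the SPQR-tree of each $2$-connected block using Hopcroft-Tarjan. The $S$-nodes (cycles $C_m$) are then decomposed along their separation pairs into a chain of $K_3$'s glued by $\oplus_2^-$; the $R$-nodes (3-connected skeletons) are identified as $W_n$, $\prism$, or $K_{3,3}$ from their degree sequence and adjacencies in time linear in their size; $P$-nodes do not arise in simple graphs. Each resulting clique-sum is labelled $\oplus_2$ or $\oplus_2^-$ in $O(1)$ by testing whether the corresponding separating pair is an edge of $G$. Sparsity of $(K_5\setminus e)$-minor-free graphs yields $\Oh{|V(G)|}$ overall.
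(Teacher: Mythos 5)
Your proof follows the same skeleton as the paper's: decompose into $2$-connected blocks (giving the $\oplus_1$'s), then split each block at separation pairs via the Hopcroft--Tarjan triconnected decomposition, inserting $\oplus_2^-$ exactly when the separating pair is a non-edge, and justifying that each side with the virtual edge added remains $(K_5\setminus e)$-minor-free by contracting a $u$--$v$ path on the opposite side --- this is the paper's argument, merely phrased as a direct minor embedding rather than by contradiction. Where you genuinely diverge is the base case: the paper simply cites Chimani et al.\ for the classification of $3$-connected $(K_5\setminus e)$-minor-free graphs, whereas you rederive it from Theorem~\ref{thm:wagner} by passing to a maximal supergraph $G^+$, observing that $3$-connectivity forces its Wagner decomposition to be trivial, and then noting that deleting any edge of $W_n$, \prism{} or $K_{3,3}$ creates a degree-$2$ vertex, so $G=G^+$. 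That argument is correct and makes the theorem self-contained modulo Wagner's result, which is a genuine (if modest) gain. Two small points. First, your claim that $P$-nodes do not arise in simple graphs is false (e.g.\ $K_{2,3}$ has one); they are harmless, since a separation pair with $k\geq 3$ bridges is handled by $k-1$ successive $2$-sums over the same pair, but the assertion as written should be corrected. Second, you are actually more careful than the paper about $S$-nodes: a long cycle is a triconnected component without being $3$-connected, and your explicit splitting of $C_m$ into a chain of triangles under $\oplus_2^-$ makes precise a step the paper's induction (whose base case assumes a single-node tree yields a $3$-connected graph) leaves implicit.
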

\begin{proofof}{Theorem~\ref{thm:decomp}}
We start by finding a decomposition of the $(K_5\setminus e)$-minor-free graph
$G$ into a tree of $2$-connected components;
this can be done in linear time by a depth-first search algorithm~\cite{HopcroftT:73b}.
The components are attached to each other at shared vertices and $G$ corresponds to
$1$-sums of these components. 

Consider now a $2$-connected component $H$ of $G$. By a linear time algorithm of
Hopcroft and Tarjan~\cite{HopcroftT:73} we construct a decomposition of $H$ 
into a tree $T$ of $3$-connected components. Informally, the nodes of $T$ are $3$-connected subgraphs of $H$ and if two nodes share an edge in $T$ then the corresponding subgraphs in $H$ share two vertices. By induction on the number of vertices
in $T$ we show that $H$ is obtained from a wheel graph, Prism, $K_2$, $K_3$, and 
$K_{3,3}$ by the operations $\oplus_2$ and $\oplus_2^-$. 

If $T$ has only a single vertex, then $H$ is a $3$-connected 
$(K_5\setminus e)$-minor-free graph; the only such graphs are a wheel graph, Prism, 
$K_2$, $K_3$, or $K_{3,3}$ (cf.~\cite{CJN:23}) which completes the proof of the base case. 

For the inductive step, assume that $T$ has at least two vertices, and let $t$ be an
arbitrary leaf of $T$. Let $H_1=(V_1,E_1)$ be the subgraph of $H$ corresponding to
$T\setminus t$, $H_2=(V_2,E_2)$ be the subgraph of $H$ corresponding to $t$,
and let $u$ and $v$ be the two vertices in $V_1\cap V_2$. We distinguish two cases.

If $uv\in E_1$, then $H=H_1\oplus_{uv}H_2$. Therefore, by our inductive hypothesis,
$H_1=G_1\oplus^1\cdots\oplus^{\ell-2}G_{\ell-1}$ where for each $i$, $G_i$ is
a wheel graph, Prism, $K_2$, $K_3$, or $K_{3,3}$, and $\oplus^i$ is either $\ksum{1},\ksum{2},$ or $\ksum{2}^-$. Since $G=H_1\ksum{2}H_2$, and $H_2$ is one of the graphs in our list, 
the proof is completed.

If $uv\notin E$, then
$H=H_1'\oplus_{uv}^-H_2'$ where $H_i'=H_i\cup \{uv\}$ for $i=1,2$. Observe that 
both $H_1'$ and $H_2'$ are
$(K_5\setminus e)$-minor-free graphs. To see this, assume without loss of generality that $H_1'$ contains a $K_5\setminus e$ minor. As $H_2'$ is a connected graph, $u$ and $v$ are connected by a path in $H_2'$ and so $H$ contains a $K_5\setminus e$ minor as well,
which is a contradiction to the fact that $G$ is $(K_5\setminus e)$-minor-free.

Therefore, by our inductive hypothesis, $H_1'=G_1\oplus^1\cdots\oplus^{\ell-2}G_{\ell-1}$ 
where for each $i$, $G_i$ is
a wheel graph, Prism, $K_2$, $K_3$, or $K_{3,3}$, and $\oplus^i$ is either $\ksum{1},\ksum{2},$ or $\ksum{2}^-$. Since $H_2'$ is a $3$-connected $(K_5\setminus e)$-minor-free graph, it is either a wheel graph, Prism, $K_2$, $K_3$, or $K_{3,3}$. Finally, as $G=H_1'\ksum{2}^-H_2'$, the proof is completed.
\qed
\end{proofof}

Let $P$ be a polytope in $\BR^d$. A polytope $Q$ in $\BR^{d+r}$ is called an
\textit{extended formulation} of $P$ if $P$ is a
projection of $Q$ onto the first $d$ coordinates.
The \textit{size} of a polytope is defined to be the number of its
facet-defining inequalities, and the \emph{extension complexity} of a polytope $P$,
denoted by $\xc{P}$, is the size of its smallest extended formulation.
\begin{theorem}[Balas~\cite{Balas:98}, Theorem 2.1]\label{thm:balas}
If $P_1,\ldots, P_q$ are non-empty polytopes, then \[\xc{\conv{\bigcup_{i=1}^q P_i}}\leq q + \sum_{i=1}^q\xc{P_i}\ .\] Furthermore, such an extended formulation can be constructed from extended formulations of the $P_i$'s in linear time. 
\end{theorem}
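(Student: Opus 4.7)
The plan is to use Balas' classical disjunctive-programming construction. Start with an extended formulation $Q_i = \{(x,y^i) \in \BR^{d+r_i} : A_i x + B_i y^i \leq b_i\}$ for each $P_i$, chosen so that $Q_i$ has exactly $\xc{P_i}$ facet-defining inequalities and projects onto $P_i$. The target extended formulation will live in the space of variables $x, (x^i)_{i=1}^q, (y^i)_{i=1}^q, (\lambda_i)_{i=1}^q$ and consist of the linear system
\begin{align*}
x &= \sum_{i=1}^q x^i, \qquad \sum_{i=1}^q \lambda_i = 1,\\
A_i x^i + B_i y^i &\leq \lambda_i b_i \quad (i=1,\dots,q),\\
\lambda_i &\geq 0 \quad (i=1,\dots,q).
\end{align*}
The $q$ non-negativity constraints together with the $\sum_i \xc{P_i}$ homogenized linear inequalities account for the $q + \sum_i \xc{P_i}$ inequalities in the bound; the two sets of equalities contribute no inequalities after they are used to eliminate variables.

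Next I would verify that the projection $R$ of this polytope onto the $x$-coordinates equals $\conv{\bigcup_{i=1}^q P_i}$. The inclusion $\conv{\bigcup_i P_i}\subseteq R$ is immediate: given $x=\sum_i \lambda_i p_i$ with $p_i\in P_i$, lift each $p_i$ to some $(p_i,\hat y^i)\in Q_i$ and set $x^i := \lambda_i p_i$, $y^i:=\lambda_i \hat y^i$. Then $A_i x^i + B_i y^i = \lambda_i (A_i p_i + B_i \hat y^i)\leq \lambda_i b_i$, so $(x, x^i, y^i, \lambda_i)$ satisfies the system. For the reverse inclusion, given any feasible point, let $I:=\{i:\lambda_i>0\}$ and define $p_i := x^i/\lambda_i$, $\hat y^i := y^i/\lambda_i$ for $i\in I$; then $A_i p_i + B_i \hat y^i \leq b_i$ shows $p_i\in P_i$, and for $i\notin I$ the inequality $A_i x^i + B_i y^i\leq 0$ forces $(x^i,y^i)$ into the recession cone of $Q_i$, which is trivial since $Q_i$ is bounded (this is where the polytope assumption is used). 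Thus $x^i = 0$ for $i\notin I$ and $x = \sum_{i\in I}\lambda_i p_i$ is a convex combination of points of $\bigcup_i P_i$, as required.

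The linear-time ``furthermore'' part is immediate from the construction: given the $Q_i$'s, one writes down the new system by literally concatenating the matrices $A_i, B_i, b_i$ and adding the two equality blocks and the $q$ sign constraints, so the total work is proportional to the size of the input formulations.

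I do not anticipate a major obstacle: the only real subtlety is the direction $R\subseteq \conv{\bigcup_i P_i}$ when some $\lambda_i$ vanish, and the fact that each $P_i$ (hence each $Q_i$, chosen minimally) is bounded disposes of it cleanly. Counting the inequalities correctly, and not double-counting the equalities, is the only bookkeeping point to watch.
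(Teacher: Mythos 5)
Your construction is exactly Balas's classical disjunctive-programming formulation, which is all the paper relies on: the statement is imported by citation (Balas, Theorem~2.1) and the paper gives no proof of its own. Your argument is correct --- including the key use of boundedness of the minimal extensions $Q_i$ to force $(x^i,y^i)=0$ when $\lambda_i=0$ --- with only the trivial caveat that a non-full-dimensional $Q_i$ may also carry affine-hull equations, which can be homogenized in the same way and do not affect the count of $q+\sum_{i=1}^q\xc{P_i}$ inequalities.
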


Let $P_1\subseteq\mathbb{R}^{d_1+k}$ and $P_2\subseteq\mathbb{R}^{d_2+k}$ be two 
$0/1$-polytopes with vertices $\vertexset(P_1)$ and $\vertexset(P_2)$, respectively. The 
\emph{glued product} $P_1\times_k P_2$ of $P_1$ and $P_2$, where the gluing is done over the last $k$ 
coordinates, is defined to be
\[P_1\times_k P_2:=\mathrm{conv}\left\{\left.\begin{pmatrix}\bx\\\by\\\bz\end{pmatrix}\in
\{0,1\}^{d_1+d_2+k}\right|\begin{pmatrix}\bx\\\bz\end{pmatrix}\in\vertexset(P_1), 
\begin{pmatrix}\by\\\bz\end{pmatrix}\in\vertexset(P_2)\right\}.\]

We will use the following known result about glued products.
\begin{lemma}[Gluing lemma~\cite{Margot_thesis,KolmanKT:20}] \label{lem:glued_product}
Let $P$ and $Q$ be $0/1$-polytopes and let the $k$ (glued) coordinates
in $P$ be labeled $x_1,\ldots,x_k$, and the $k$
(glued) coordinates in $Q$ be labeled $y_1,\ldots,y_k$. Suppose that
$\mathbf{1}^\intercal \bx \leqslant 1$ is valid for $P$ and $\mathbf{1}^\intercal \by \leqslant 1$ is valid for $Q$. 
Then $\xc{P\times_k Q}\leqslant  \xc{P}+\xc{Q}$.
\end{lemma}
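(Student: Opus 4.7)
The plan is to build an extended formulation of $P\times_k Q$ by placing extended formulations of $P$ and $Q$ side by side, identifying the two copies of the glued coordinates, and then verifying that the natural projection equals $P\times_k Q$ rather than some strictly larger polytope. This last verification is the step where the one-hot hypothesis on the glued coordinates plays an essential role.

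Concretely, I first fix extended formulations of $P$ and $Q$ of sizes $\xc{P}$ and $\xc{Q}$ respectively, writing $P=\{(\bx',\bx):\exists\,\bs_P,\ M_P(\bx',\bx,\bs_P)\leqslant\bm_P\}$ and $Q=\{(\by',\by):\exists\,\bs_Q,\ M_Q(\by',\by,\bs_Q)\leqslant\bm_Q\}$, with $\bx$ and $\by$ the glued coordinate vectors. Renaming $\bx$ and $\by$ to a common vector $\bz$ and juxtaposing the two inequality systems produces a polyhedron in $(\bx',\by',\bz,\bs_P,\bs_Q)$-space defined by exactly $\xc{P}+\xc{Q}$ facet inequalities. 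Projecting out $\bs_P$ and $\bs_Q$ gives the polytope
\[R_0:=\{(\bx',\by',\bz):(\bx',\bz)\in P,\ (\by',\bz)\in Q\},\]
so it suffices to prove $R_0=P\times_k Q$.

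The inclusion $P\times_k Q\subseteq R_0$ is immediate from the definition of the glued product. For the reverse inclusion I take any point $(\bx'_0,\by'_0,\bz_0)\in R_0$ and write $(\bx'_0,\bz_0)=\sum_i\lambda_i(x'_i,z_i)$ as a convex combination of vertices of $P$ and $(\by'_0,\bz_0)=\sum_j\mu_j(y'_j,z'_j)$ as a convex combination of vertices of $Q$. The hypothesis $\mathbf{1}^\intercal\bx\leqslant 1$ on $P$ forces each $z_i$ to lie in $\Sigma:=\{\mathbf{0},\be_1,\ldots,\be_k\}$, and the analogous hypothesis on $Q$ forces each $z'_j$ to lie in $\Sigma$ as well. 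Grouping the $P$-vertices and the $Q$-vertices by their glued value $z\in\Sigma$ and letting $p_z$ (resp.\ $q_z$) denote the total weight of the corresponding group, I obtain $\bz_0=\sum_{z\in\Sigma}p_z\,z=\sum_{z\in\Sigma}q_z\,z$.

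The main step, and the only place where the one-hot hypothesis does essential work, is now to conclude $p_z=q_z$ for every $z\in\Sigma$: since $\Sigma$ is affinely independent in $\mathbb{R}^k$, the barycentric coordinates of $\bz_0$ with respect to $\Sigma$ are unique. Once this identification is available, averaging the non-glued parts within each group yields vectors $\bar\bx'_z$ and $\bar\by'_z$ satisfying $(\bar\bx'_z,z)\in P$ and $(\bar\by'_z,z)\in Q$, and the decomposition
\[(\bx'_0,\by'_0,\bz_0)=\sum_{z\in\Sigma}p_z\,(\bar\bx'_z,\bar\by'_z,z)\]
holds by construction. Forming an outer product of the two internal convex combinations, each summand $(\bar\bx'_z,\bar\by'_z,z)$ is in turn a convex combination of integer points $(x'_i,y'_j,z)$ with $(x'_i,z)\in\vertexset(P)$ and $(y'_j,z)\in\vertexset(Q)$, hence lies in $P\times_k Q$. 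Therefore $(\bx'_0,\by'_0,\bz_0)\in P\times_k Q$, which gives $R_0\subseteq P\times_k Q$ and yields the claimed bound $\xc{P\times_k Q}\leqslant\xc{P}+\xc{Q}$.
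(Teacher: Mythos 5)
Your proof is correct. The paper states this lemma as a known result, citing Margot's thesis and Kolman et al.\ without reproducing a proof, and your argument --- juxtaposing the two extended formulations, identifying the glued coordinates, using the affine independence of $\{\mathbf{0},\be_1,\ldots,\be_k\}$ (forced by the one-hot hypothesis on $0/1$-vertices) to conclude that the two convex decompositions assign the same weight to each glued value, and then taking the outer product within each group --- is exactly the standard proof from those sources, so there is nothing to add.
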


We conclude this section with a lemma about bonds of $G_1\ksum{u}G_2$, 
$G_1\ksum{uv}G_2$ and $G_1\ksum{uv}^{-}G_2$; the claims about bonds of 
$G_1\ksum{u}G_2$ and $G_1\ksum{uv}G_2$ were observed earlier~\cite{Chaourar:20,CJN:23}, the claim about bonds of $G_1\ksum{uv}^{-}G_2$ is new.
For a connected graph $G=(V,E)$ and vertices $u,v \in V$, let $\CC(G)$ denote the set
of all bonds of $G$, $\CC_{uv}(G)$ denote the set of bonds of $G$ with $u$ and $v$ on different
sides of the cut, and $\CC_{\thinbar{uv}}(G)$ the set of bonds of $G$ with $u$ and $v$ on the same side;
if $uv\in E$, we also write just $\CC_e(G)$ and
$\CC_{\bar e}(G)$ instead of $\CC_{uv}(G)$ and $\CC_{\thinbar{uv}}(G)$.
For two sets $\CC_1$ and $\CC_2$ of subsets of edges of a graph $G$,
let $\CC_1\utimes\CC_2=\{C_1\cup C_2 \ | \ C_1\in \CC_1,\ C_2\in \CC_2\}$
denote all pairwise unions of the subsets in $\CC_1$ and $\CC_2$.
\begin{lemma}\label{lem:2sum-minus}
Let $G_1=(V_1,E_1), G_2=(V_2,E_2)$ be connected graphs.
\begin{enumerate}
    \item If $V_1\cap V_2=\{u\}$, then $$\CC(G_1 \ksum{u} G_2)=\CC(G_1)\cup \CC(G_2)\ .$$
    \item If $V_1\cap V_2=\{u,v\}$ and $E_1\cap E_2=\{uv\}$, then for $e=uv$
        \begin{align*} 
        \CC(G_1 \ksum{e} G_2)= \CC_{\bar{e}}(G_1)\cup \CC_{\bar{e}}(G_2)\cup \left (\CC_{e}(G_1)\utimes \CC_{e}(G_2)\right)\ .
        \end{align*}
    \item If $V_1\cap V_2=\{u,v\}$, $E_1\cap E_2=\{uv\}$, $G_1\setminus uv$ is connected and $G_2\setminus uv$ is not, then for $e=uv$
    \begin{align*} 
        \CC(G_1 \ksum{e}^{-} G_2)= \CC(G_1\setminus e)\cup \CC_{\bar{e}}(G_2) \ .
        \end{align*}
    \item If $V_1\cap V_2=\{u,v\}$, $E_1\cap E_2=\{uv\}$, and both $G_1\setminus uv$  and $G_2\setminus uv$ are connected, then for $e=uv$
    \begin{align*} 
        \CC(G_1 \ksum{e}^{-} G_2)= \CC_{\bar{e}}(G_1)\cup \CC_{\bar{e}}(G_2)\cup \left (\CC_{uv}(G_1\setminus e)\utimes \CC_{uv}(G_2\setminus e)\right)\ .
        \end{align*}
\end{enumerate}
\end{lemma}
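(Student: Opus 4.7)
The plan is to verify each of the four identities by proving both inclusions, using a uniform toolkit. The key structural observation is that in $G_1\ksum{1}G_2$ (respectively $G_1\ksum{2}G_2$ and $G_1\ksum{2}^{-}G_2$), any path in $G$ between a vertex of $V_1\setminus V_2$ and a vertex of $V_2\setminus V_1$ must traverse the shared vertex $u$ (respectively one of the shared vertices $u,v$). Combined with this, I would use an \emph{excursion pruning} argument: if $(S,\bar{S})$ is a bond of $G$ and $a,b\in S\cap V_i$, then any $a$-$b$ path in $G[S]$ can enter $V_{3-i}$ only through a shared vertex lying in $S$; when only one such vertex lies in $S$ (as happens whenever $u,v$ are on opposite sides of the bond), every excursion into $V_{3-i}$ enters and exits through the same vertex and can therefore be deleted, so that $G_i[S\cap V_i]$, or $(G_i\setminus e)[S\cap V_i]$, is itself connected. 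These two ingredients convert every bond of $G$ into a pair of bonds of the summands, and the reverse lifts are obtained by placing the complementary summand on the appropriate side so that connectivity is preserved.

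Items 1 and 2 are already known and their proofs follow this template directly. For item 1 I assume without loss of generality that $u\in S$; the structural observation forces $\bar{S}$ to lie entirely in $V_1\setminus\{u\}$ or in $V_2\setminus\{u\}$, so $(S,\bar{S})$ is a bond of exactly one of $G_1,G_2$, and the reverse inclusion places the whole of $V_{3-i}$ on the side containing $u$. For item 2 I case-split on whether $u$ and $v$ lie on the same or on opposite sides of $(S,\bar{S})$: the same-side case reduces as in item 1 to $\CC_{\bar{e}}(G_1)\cup\CC_{\bar{e}}(G_2)$, while the opposite-side case uses excursion pruning to show that the restrictions to $V_1$ and $V_2$ lie in $\CC_e(G_1)$ and $\CC_e(G_2)$, with the cut in $G$ being their union (the edge $e$ is shared between both $G_1$ and $G_2$ and is counted once).

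Item 4 follows the template of item 2 almost verbatim: since both $G_i\setminus e$ are connected, the same-side case yields $\CC_{\bar{e}}(G_1)\cup\CC_{\bar{e}}(G_2)$, and in the opposite-side case the restrictions now lie in $\CC_{uv}(G_i\setminus e)$, giving the glued term $\CC_{uv}(G_1\setminus e)\utimes\CC_{uv}(G_2\setminus e)$; the only subtlety relative to item 2 is that $e$ is absent from $G$, so the cut in $G$ really is the disjoint union of the two restrictions. Item 3 is the most delicate. Let $A$ and $B$ be the components of $G_2\setminus e$ containing $u$ and $v$ respectively; in $G$, $A\setminus\{u\}$ is attached only through $u$, $B\setminus\{v\}$ only through $v$, and no edge runs between them. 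Given a bond $(S,\bar{S})$ of $G$, I distinguish two sub-cases: either (a) $A\setminus\{u\}$ lies entirely with $u$ and $B\setminus\{v\}$ entirely with $v$, in which case excursion pruning shows that the cut restricts to a bond of $G_1\setminus e$; or (b) some vertex of $(A\cup B)\setminus\{u,v\}$ lies on the side opposite to its attachment vertex, which combined with the pendant structure forces one of $S,\bar{S}$ to be contained in $A$ or in $B$, and the cut is then identified with a bond of $G_2$ having $u$ and $v$ on the same side.

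The main obstacle is precisely this case analysis for item 3: the disconnection of $G_2\setminus e$ creates pendant blocks $A$ and $B$ at $u$ and $v$, and one must argue carefully that splitting either of them across a bond of $G$ is incompatible with the connectedness of both sides of that bond, forcing the clean dichotomy above. The reverse inclusions in all four items are routine lifts, obtained by attaching the complementary summand to the side(s) containing the shared vertices; connectivity is preserved because each summand, or its $e$-deletion, is connected on every side on which it is being attached.
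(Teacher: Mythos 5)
Your proposal is correct and follows essentially the same route as the paper's proof: both inclusions are verified case by case, using the fact that bond sides are connected and that the summands meet only in the shared vertex/vertices (your ``excursion pruning'' is the explicit form of the connectivity argument the paper uses implicitly), and the reverse inclusions are the same routine lifts. The only cosmetic difference is in item 3, where you split on whether the pendant components of $G_2\setminus e$ stay with their attachment vertices, while the paper splits on whether $u,v$ lie on the same or on opposite sides of the bond; the two case analyses are equivalent.
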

\begin{proof} 
The four cases 
are illustrated in Figures~\ref{fig:1sum}-\ref{fig:2sumB}.
For each of them, we show that the left-hand side is a subset of the right-hand side, and
vice versa.
\begin{figure}[bth]
\centering
 \begin{subfigure}{.4\textwidth}
    \includegraphics[scale=0.31]{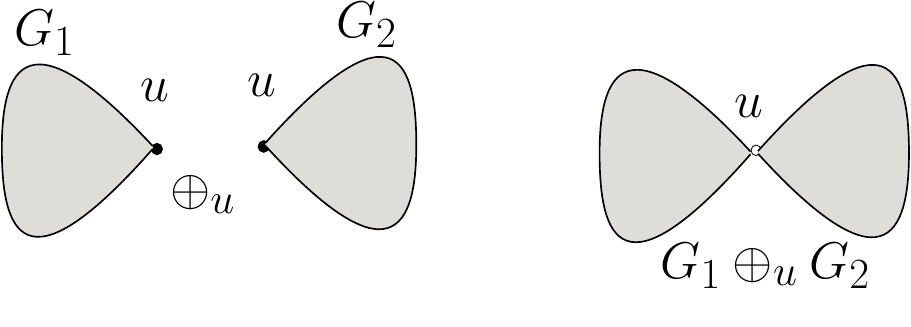}
    \caption{Case 1.}
    \label{fig:1sum}
 \end{subfigure}
 \hfill
 \begin{subfigure}{.4\textwidth}
   \includegraphics[scale=0.31]{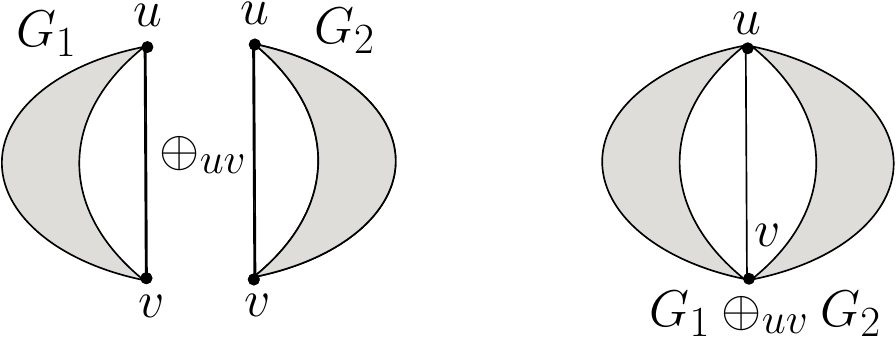}
   \caption{Case 2.}
   \label{fig:2sum}
 \end{subfigure}\\
 \begin{subfigure}{.4\textwidth}
    \includegraphics[scale=0.31]{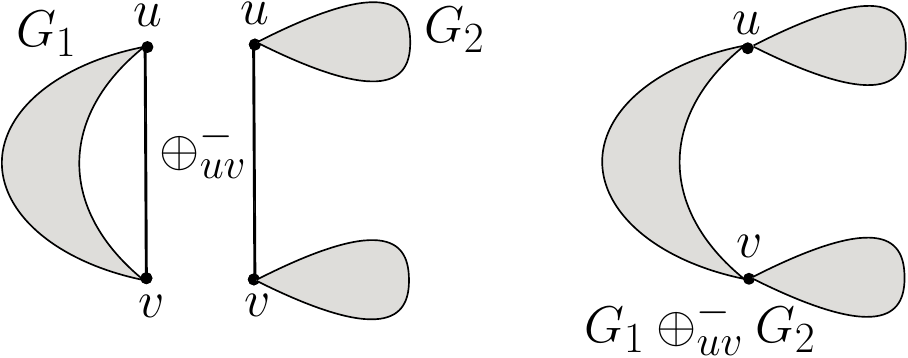}
    \caption{Case 3.}
    \label{fig:2sumA}
 \end{subfigure}
 \hfill
 \begin{subfigure}{.4\textwidth}
    \includegraphics[scale=0.31]{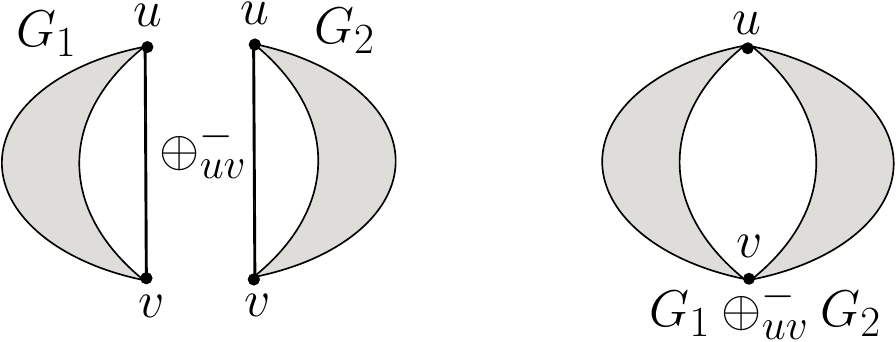}
    \caption{Case 4.}
    \label{fig:2sumB}
 \end{subfigure}
 \hfill
 \caption{$\{1,2\}$-sum of graphs $G_1$ and $G_2$}
\end{figure}

Case 1. Consider $F\in \CC(G_1 \ksum{u} G_2)$. As both sides of the bond $F$ are connected,
either $F\subseteq E_1$ or $F\subseteq E_2$; in the former case, $F\in \CC(G_1)$, in the
later case, $F\in \CC(G_2)$. 
On the other hand, if $F\in \CC(G_1)$ or $F\in \CC(G_2)$,
then the removal of $F$ from $G_1 \ksum{u} G_2$ separates it into two connected parts, that is,
$F$ is a bond of it.

Case 2. Consider first $F\in \CC_{\bar e}(G_1 \ksum{e} G_2)$. As both sides of the bond $F$
are connected and $u$ and $v$ are on the same side of it, we conclude that either 
$F\in \CC_{\bar e}(G_1)$ or $F\in \CC_{\bar e}(G_2)$. If  $F\in \CC_{e}(G_1 \ksum{e} G_2)$,
then $u$ and $v$ are on different sides of the bond $F$. Thus, $F\cap E_1$ must be a bond of 
$G_1$ and $F\cap E_2$ a bond of $G_2$.
On the other hand, if $F\in \CC_{\bar{e}}(G_1)$ or $F\in \CC_{\bar{e}}(G_2)$ or $F\in \CC_{e}(G_1)\utimes \CC_{e}(G_2)$, then $F$ is a bond 
of $G_1 \ksum{e} G_2$.

Case 3. Consider first $F\in \CC_{\thinbar{uv}}(G_1 \ksum{e}^- G_2)$. 
As $G_2\setminus e$ is disconnected by our assumption, either 
$F\in \CC_{\thinbar{uv}}(G_1\setminus e)$ 
or $F\in \CC_{\bar{e}}(G_2)$. 
If $F\in \CC_{uv}(G_1 \ksum{e}^- G_2)$, 
again exploiting the assumption that $G_2\setminus e$ is disconnected,
we conclude that 
$F\in \CC_{uv}(G_1\setminus e)$.    
Combining the two subcases yields that
$\CC(G_1 \ksum{e}^{-} G_2)\subseteq  \CC(G_1\setminus e)\cup \CC_{\bar{e}}(G_2)$.

For the opposite inclusion in Case 3, note that each of the two connected components of $G_2\setminus\nolinebreak e$ shares exactly one vertex 
with $G_1\setminus e$. Thus, every bond of $G_1\setminus e$ is a bond of $G_1 \ksum{e}^- G_2$,
and also every bond $F\in \CC_{\bar{e}}(G_2)$ is a bond of $G_1 \ksum{e}^- G_2$
(cf.~Fig.~\ref{fig:2sumA}), that is, $\CC(G_1\setminus e)\cup \CC_{\bar{e}}(G_2) \subseteq \CC(G_1 \ksum{e}^{-} G_2)$.

Case 4. For the ``$\subseteq$" inclusion we again distinguish two subcases. 

If $F\in \CC_{\thinbar{uv}}(G_1 \ksum{e}^{-} G_2)$,    
then one of the subgraphs
$G_1\setminus e$ and $G_2\setminus e$ is untouched by $F$, and $F$
is a bond of the other subgraph with $u$ and $v$ on the same side; note
that $\CC_{\thinbar{uv}}(G_i\setminus e)\subseteq \CC_{\bar{e}}(G_i)$, for $i=1,2$. 

If $F\in \CC_{uv}(G_1 \ksum{e}^{-} G_2)$,  
then the assumption of connectivity
of $G_i\setminus e$ implies that $F\cap E_i$ is a bond of 
$G_i\setminus e$ with $u$ and $v$ on different sides, for $i=1,2$. Thus,
$\CC(G_1 \ksum{e}^{-} G_2)\subseteq \CC_{\bar{e}}(G_1)\cup \CC_{\bar{e}}(G_2)\cup \left (\CC_{uv}(G_1\setminus e)\utimes \CC_{uv}(G_2\setminus e)\right)$.

For the opposite inclusion in Case 4, assume first that $F\in \CC_{\bar{e}}(G_1)$.   
As $G_2\setminus e$ is connected and shares with $G_1$ the vertices $u$ and $v$ only, 
$(G_1\ksum{e}^{-}G_2)\setminus F$ has the same number of connected components as $G_1\setminus F$,
namely two, so $F$ is a bond of $G_1\ksum{e}^{-}G_2$. The same argument applies if 
$F\in \CC_{\bar{e}}(G_2)$. 

Assume now that $F_i\in\CC_{uv}(G_i\setminus e)$, for $i=1,2$, and $F=F_1\cup F_2$.
Let $H_{iu}$ denote the component of $(G_i\setminus e)\setminus F_i$ containing $u$ and $H_{iv}$ the component containing $v$, for $i=1,2$.
Then $(G_1 \ksum{e}^{-} G_2)\setminus F$ consists of two connected components, $H_{1u}\ksum{u}H_{2u}$ and $H_{1v}\ksum{u}H_{2v}$, meaning that $F$ is a bond of the graph.
\qed
\end{proof}

\section{The Subdirect Sum of Polytopes}
In the following we will use matrices and sets of (row) vectors simultaneously with the following interpretation. A matrix $\bB$ will sometimes be viewed as a set of vectors where each row of $\bB$ is a member of the set. Conversely, a set of row vectors $\{\bb_1,\ldots,\bb_m\}$ will be, when convenient, viewed as a matrix $\bB$ with $m$ rows where the $i$-th row is $\bb_i.$

\begin{lemma}\label{lem:direct-sum-facets} 
    Let $P_1=\conv{\bV_1}=\{\bx~|~\bA_1\bx\leqslant \mathbf{0}, \bB\bx\leqslant \mathbf{1}\}$ and $P_2=\conv{\bV_2}=\{\by~|~\bA_2\by\leqslant \mathbf{0}, \bC\by\leqslant \mathbf{1}\}$ be polytopes. Then,
    \begin{align}\label{eq:subdirectsum}
    \conv{\begin{matrix}\bV_1&\mathbf{0}\\\mathbf{0} & \bV_2\end{matrix}}=\left\{\left.\begin{pmatrix}\bx\\\by\end{pmatrix}~\right|~\begin{matrix*}[l]\bA_1\bx\leqslant\mathbf{0},\bA_2\by\leqslant\mathbf{0}&\\
    \bb\bx+\bc\by\leqslant 1&\forall \bb\in\bB, \forall\bc\in\bC\end{matrix*}\right\},
    \end{align}
    where the left-hand side is a shorthand of 
$\conv{\{(\bv,\mathbf{0})| \bv\in \bV_1\}\cup \{(\mathbf{0},\bv)| \bv\in \bV_2\}}$.

\end{lemma}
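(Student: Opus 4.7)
I would prove the two inclusions of~\eqref{eq:subdirectsum} separately; write $L$ for its left-hand side and $R$ for its right-hand side. The inclusion $L \subseteq R$ is routine: each generator $(\bv, \mathbf{0})$ with $\bv \in \bV_1$ satisfies $\bA_1 \bv \le \mathbf{0}$ and $\bB \bv \le \mathbf{1}$ because $\bv \in P_1$, so the three families of inequalities defining $R$ all hold (in particular, $\bb\bv + \bc\mathbf{0} = \bb\bv \le 1$ for every $\bb \in \bB, \bc \in \bC$); the argument for $(\mathbf{0}, \bv)$ with $\bv \in \bV_2$ is symmetric, and convexity of $R$ then yields $L \subseteq R$.

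For the reverse inclusion, the plan is to take an arbitrary $(\bx^*, \by^*) \in R$ and produce an explicit decomposition
\[
(\bx^*, \by^*) \;=\; \lambda\,(\bx_1, \mathbf{0}) \;+\; (1-\lambda)\,(\mathbf{0}, \by_2)
\]
with $\lambda \in [0,1]$, $\bx_1 \in P_1$ and $\by_2 \in P_2$; once this is achieved, expanding $\bx_1$ and $\by_2$ as convex combinations of vertices of $P_1$ and $P_2$ certifies $(\bx^*, \by^*) \in L$. Setting $\alpha := \max_{\bb \in \bB} \bb\bx^*$ and $\beta := \max_{\bc \in \bC} \bc\by^*$, for $\lambda \in (0,1)$ the point $\bx_1 := \bx^*/\lambda$ lies in $P_1$ precisely when $\lambda \ge \alpha$ (since $\bA_1 \bx^* \le \mathbf{0}$ is scale-invariant), and $\by_2 := \by^*/(1-\lambda)$ lies in $P_2$ precisely when $\lambda \le 1 - \beta$. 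So the task reduces to producing a $\lambda \in [\alpha, 1-\beta] \cap [0,1]$.

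The bounds $\alpha, \beta \ge 0$ and $\alpha + \beta \le 1$ are exactly where the hypotheses enter. The coupling inequality evaluated at the maximizing $\bb$ and $\bc$ gives $\alpha + \beta \le 1$ directly. For non-negativity, I would exploit the fact that $P_1$ and $P_2$ are bounded polytopes and hence have trivial recession cones $\{\bx : \bA_1 \bx \le \mathbf{0},\, \bB \bx \le \mathbf{0}\} = \{\mathbf{0}\}$ (and similarly for $P_2$); by contraposition, $\bA_1 \bx^* \le \mathbf{0}$ forces either $\bx^* = \mathbf{0}$ (in which case $\alpha = 0$) or $\bb \bx^* > 0$ for some $\bb \in \bB$ (so $\alpha > 0$), and symmetrically for $\beta$. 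In my view this is the main obstacle: recognizing that boundedness of the $P_i$ is precisely what is needed to conclude $\alpha, \beta \ge 0$, without which the candidate interval for $\lambda$ could lie entirely outside $[0,1]$.

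Finally, I would handle the boundary cases separately. If $\bx^* = \mathbf{0}$, take $\lambda = 0$; the coupling inequality specialized to $\bx^* = \mathbf{0}$ reads $\bc \by^* \le 1$ for all $\bc \in \bC$, whence $\by^* \in P_2$ and $(\mathbf{0}, \by^*)$ is a convex combination of the generators $(\mathbf{0}, \bv)$, $\bv \in \bV_2$. The case $\by^* = \mathbf{0}$ is symmetric, and otherwise $\alpha, \beta > 0$ with $\alpha + \beta \le 1$, so any $\lambda$ in the non-empty interval $[\alpha, 1-\beta] \subset (0,1)$ produces the desired decomposition and completes the proof.
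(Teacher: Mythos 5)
Your proof is correct, and on the forward inclusion it coincides with the paper's. On the reverse inclusion you use the same two ingredients as the paper but carry the argument further. The paper takes a point $(\bx,\by)$ of the right-hand side and shows, via the coupling inequalities together with boundedness of $P_1$ (if $\bb\bx<0$ for every $\bb\in\bB$ then $\lambda\bx\in P_1$ for all $\lambda\geqslant 0$, impossible for a polytope), that some $\bb\bx\geqslant 0$ and some $\bc\by\geqslant 0$, hence $\bB\bx\leqslant\mathbf{1}$ and $\bC\by\leqslant\mathbf{1}$, i.e. $\bx\in P_1$ and $\by\in P_2$ --- and ends there, leaving implicit how $(\bx,\by)$ is actually written as a convex combination of points $(\bv,\mathbf{0})$, $\bv\in\bV_1$, and $(\mathbf{0},\bv)$, $\bv\in\bV_2$. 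You supply exactly that step: with $\alpha=\max_{\bb\in\bB}\bb\bx$ and $\beta=\max_{\bc\in\bC}\bc\by$, the coupling inequalities give $\alpha+\beta\leqslant 1$, the trivial recession cone (the same boundedness fact the paper invokes) gives $\alpha,\beta\geqslant 0$, and any $\lambda\in[\alpha,1-\beta]$ yields the decomposition $(\bx,\by)=\lambda\,(\bx/\lambda,\mathbf{0})+(1-\lambda)\,(\mathbf{0},\by/(1-\lambda))$ with the scaled points in $P_1$ and $P_2$, the boundary cases $\bx=\mathbf{0}$ or $\by=\mathbf{0}$ being handled directly. This explicit scaling argument is genuinely needed, since $\bx\in P_1$ and $\by\in P_2$ alone would only place $(\bx,\by)$ in a product-type relaxation of the subdirect sum; so your write-up is, if anything, more complete than the paper's, while resting on the same ideas. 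Like the paper, you silently assume the non-degenerate situation ($P_1,P_2$ nonempty and $\bB,\bC$ nonempty), which is harmless here but worth a remark if you want a fully self-contained statement.
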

\begin{proof}
Let $P$ and $P'$, resp., denote the polytopes on the left and right, resp., sides of the equality~(\ref{eq:subdirectsum}).
We start by showing that $P\subseteq P'$. Consider $\binom{\bx}{\by}\in P$.
Then $\bx^\intercal=\sum_{\bu\in \bV_1} \lambda_{\bu} \bu$,  $\by^\intercal=\sum_{\bv\in \bV_2} \lambda_{\bv} \bv$ for some 
nonnegative coefficients 
$\lambda_{\bz}$, $\bz\in \bV_1\cup \bV_2$, 
such that $\sum_{\bu\in \bV_1} \lambda_{\bu} + \sum_{\bv\in \bV_2} \lambda_{\bv}=1$.
As for each $\bu\in \bV_1$ and $\bb\in \bB$ we have $\bA_1\bu^\intercal\leqslant \mathbf{0}$ and 
$\bb\bu^\intercal\leqslant 1$, and analogously, 
for each $\bv\in \bV_2$ and $\bc\in \bC$ we have $\bA_2\bv^\intercal\leqslant \mathbf{0}$ and 
$\bc\bv^\intercal\leqslant 1$, we have $\bA_1\bx\leqslant\mathbf{0}$, $\bA_2\by\leqslant\mathbf{0}$
and $\bb\bx+\bc\by\leqslant 1$. Thus, $P\subseteq P'$.

Consider now $\binom{\bx}{\by}\in P'$. If for some $\bb\in\bB$, $\bb\bx \geq 0$,
then for every $\bc\in \bC$, $\bc\by\leq 1$, that is, $\bC\by\leqslant \mathbf{1}$; 
similarly, if for some $\bc\in\bC$, $\bc\by \geq 0$, then for every $\bb\in \bB$, 
$\bb\by\leq 1$, that is, $\bB\bx\leqslant \mathbf{1}$.
Thus, to prove that $P'\subseteq P$, it suffices to show that for some $\bb\in\bB$, $\bb\bx \geq 0$
and for some $\bc\in\bC$, $\bc\by \geq 0$; note that the inequalities $\bA_1\bx\leqslant\mathbf{0}$ 
and $\bA_2\by\leqslant\mathbf{0}$ are always satisfied for our $\bx$ and $\by$.

Assume, for a contradiction, that for every $\bb\in\bB$, $\bb\bx < 0$. Then not only $\bx\in P_1$,
but for every non-negative $\lambda$, also $\lambda \bx \in P_1$. However, this is a contradiction with
the fact that $P_1$ a polytope. Thus, there exists $\bb\in\bB$  such that $\bb\bx \geq 0$.
By the same arguments, there exists $\bc\in\bC$  such that $\bc\by \geq 0$. This completes
the proof of the lemma. \qed
\end{proof}

The above operation is called a \emph{subdirect sum} of $P_1$ and $P_2$
and if the inequalities for $P_1$ and $P_2$ in Lemma \ref{lem:direct-sum-facets} are facet-defining then so are the inequalities of the subdirect sum.

\section{The Bond Polytope}
Let $G=(V,E)$ be a graph, and let $E'\subseteq\binom{V}{2}\setminus E$ be a subset of non-edges of $G$. An \emph{Augmented Bond Polytope} $\abondp{G}{E'}$ is the convex hull of vectors
$\bx\in\BR^{|E|+|E'|}$ where $\bx_E$ is the characteristic vector of a bond $F\subseteq E$ in $G$ and for every $uv\in E'$,
$\bx_{uv}=0$ if $u$ and $v$ are on the same side of the bond $F$ and $\bx_{uv}=1$ if $u$ and $v$ are on different sides of the bond $F$. We consider the augmented bond polytope due to the fact that when looking at the $2$-sum operation $\ksum{uv}^{-}$,
the bond polytope without $uv$ as an augmented coordinate does not behave well. In particular, an extended formulation for the bond polytope of the resulting graph can only be recursively constructed if the behaviour of vertices $u,v$ in each of the bond of constituent graphs is stored.

\subsection{Description of the Bond Polytope under $k$-sums $\ksum{1}$ and $\ksum{2}$ }
\begin{lemma}\label{lem:1sum-vertices}
Let $G_1=(V_1,E_1)$ and $G_2=(V_2,E_2)$ be two graphs with $V_1\cap V_2=\{v\}$,
and let $E_1'\subseteq \binom{V_1}{2}\setminus E_1, E_2'\subseteq \binom{V_2}{2}\setminus E_2$. Let $\abondp{G_1}{E_1'}$, and $\abondp{G_2}{E_2'}$ be their respective augmented bond polytopes. Suppose $\abondp{G_1}{E_1'}=\conv{\bB_1}$ and  $\abondp{G_2}{E_2'}=\conv{\bB_2}$. Then, 
\[\abondp{G_1\ksum{v}G_2}{E_1'\cup E_2'}=\conv{\begin{matrix}
    \bB_1 & \mathbf{0} \\ \mathbf{0} & \bB_2
\end{matrix}} \ . \] 
\end{lemma}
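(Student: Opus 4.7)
The plan is to identify the vertices of $\abondp{G_1 \ksum{v} G_2}{E_1'\cup E_2'}$ explicitly and show that they are exactly the rows of the block matrix $\left(\begin{smallmatrix}\bB_1 & \mathbf{0}\\ \mathbf{0} & \bB_2\end{smallmatrix}\right)$. Taking convex hulls then gives the equality. The starting point is Lemma~\ref{lem:2sum-minus}, Case~1, which says $\CC(G_1 \ksum{v} G_2) = \CC(G_1) \cup \CC(G_2)$. Consequently every bond $F$ of the $1$-sum satisfies either $F \subseteq E_1$ or $F \subseteq E_2$, so the edge-indicator part of its vector already has the block structure (the $E_2$-block is zero in the first case, and the $E_1$-block is zero in the second).

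The main work is to verify that the augmented coordinates over $E_1' \cup E_2'$ behave the same way. Fix a bond $F \subseteq E_1$ of $G_1$, regarded as a bond of $G:=G_1 \ksum{v} G_2$. Since $F \cap E_2 = \emptyset$ and $G_2$ is connected, every vertex of $V_2$ is connected to $v$ by a path in $G\setminus F$, so all of $V_2$ lies on the same side of $F$ in $G$, namely the side containing $v$. Hence for every non-edge $pq \in E_2'$, both endpoints $p,q \in V_2$ are on the same side of $F$, so the augmented coordinate $x_{pq}$ equals $0$. Meanwhile, the augmented coordinates on $E_1'$ are determined by which side of $F$ each endpoint lies in $G_1$ itself, which is identical to their values in $\abondp{G_1}{E_1'}$. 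Thus the full augmented vector of $F$ in $\abondp{G}{E_1'\cup E_2'}$ is precisely a row of $\bB_1$ padded with zeros on the $E_2 \cup E_2'$ coordinates. The symmetric argument with the roles of $G_1$ and $G_2$ swapped handles bonds $F \subseteq E_2$.

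Combining both cases, the vertex set of $\abondp{G}{E_1'\cup E_2'}$ is exactly the rows of the block-diagonal matrix $\left(\begin{smallmatrix}\bB_1 & \mathbf{0}\\ \mathbf{0} & \bB_2\end{smallmatrix}\right)$ (up to the standard reordering of coordinates so that $E_1 \cup E_1'$ comes before $E_2 \cup E_2'$), and passing to convex hulls yields the claimed equality. The only nontrivial step is the connectivity argument forcing the ``off-diagonal'' augmented coordinates to be zero; the rest is bookkeeping with characteristic vectors and an appeal to Case~1 of Lemma~\ref{lem:2sum-minus}. An implicit assumption we use is that $G_1$ and $G_2$ are connected, matching the hypothesis of Lemma~\ref{lem:2sum-minus}.
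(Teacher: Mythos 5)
Your proof is correct and follows essentially the same route as the paper: invoke Case~1 of Lemma~\ref{lem:2sum-minus} to see that every bond of $G_1\ksum{v}G_2$ is a bond of one summand with the entire other summand on the side of $v$, which forces both the edge coordinates and the augmented coordinates of the other summand to zero, giving the block-diagonal vertex description. Your explicit connectivity argument for the augmented coordinates just spells out what the paper's one-line observation leaves implicit, so there is nothing to add.
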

\begin{proof}
    Notice  (cf. Lemma~\ref{lem:2sum-minus}, Case 1) that any cut in $G_1\ksum{v}G_2$ either cuts $G_1$ in two components placing all of $G_2$ in the component containing the common vertex $v$ or it cuts $G_2$ in two components placing all of $G_1$ in the component containing the common vertex $v$. Therefore, any bond in $G_1\ksum{v}G_2$ is either a bond in $G_1$ extended with zeroes at the coordinates $x_{uw}$ for $uw\in E_2\cup E'_2$, or a bond in $G_2$ extended with zeroes at the coordinates $x_{uw}$ for $uw\in E_1\cup E'_1$. \qed
\end{proof}

Note that the above description of \abondp{G_1\ksum{v}G_2}{E_1'\cup E_2'} is the subdirect sum of \abondp{G_1}{E_1'} and \abondp{G_2}{E_2'} and thus it can have a number of inequalities that is asymptotically the product of the number of inequalities describing the two multiplicands (cf. Lemma \ref{lem:direct-sum-facets}). Taking $E_1'=E_2'=\emptyset$ we get the same statement for $\bondp{G_1\ksum{1}G_2}.$

More concretely, take any graph $G$ with $k$ vertices such that $\bondp{G}$ has at least $c$ ($d$, resp.) facet-defining inequalities with right hand side $0$ ($1$, resp.). Set $G_0=G$ and let $G_i$ be obtained by taking a $1$-sum of $G_{i-1}$ and $G$ (over an arbitrarily chosen vertex). Then $G_n$ has $(k-1)n+1$ vertices but $\bondp{G_n}$ has at least $cn+d^n$ facet-defining inequalities due to Lemma \ref{lem:direct-sum-facets} and the subsequent comments. Since \bondp{K_4} has $4$ facet-defining inequalities with right hand side $1$ and $12$ facet-defining inequalities with right hand side $0$ \cite{CJN:23}, 
taking $G$ to be $K_4$ we even get $(K_5\setminus e)$-minor-free graphs with $3n+1$ vertices whose bonds have $12n+4^n$ facets, for arbitrary $n$.

Therefore, in general, one cannot obtain a linear size description for $(K_5\setminus e)$-minor-free graphs unless one is willing to consider extended formulations.

\begin{lemma}\label{lem:2sum-vertices}
Let $G_1=(V_1,E_1)$ and $G_2=(V_2,E_2)$ be two graphs with $E_1\cap E_2=\{e\}$, and let $E_1'\subseteq\binom{V_1}{2}\setminus E_1, E_2'\subseteq\binom{V_2}{2}\setminus E_2$. Let $\abondp{G_1}{E_1'}$, and $\abondp{G_2}{E_2'}$ be their respective augmented bond polytopes. Suppose $\abondp{G_1}{E_1'}=\conv{\bB_1}$ and  $\abondp{G_2}{E_2'}=\conv{\bB_2}$. Then, 
\[\abondp{G_1\ksum{e}G_2}{E_1'\cup E_2'}=\mathrm{conv}\left\{{\begin{pmatrix}
    \bB_1^0 & \mathbf{0} \\ \mathbf{0} & \bB_2^0
\end{pmatrix}} \cup (\bB_1^1 \times \bB_2^1)\ \right \},\]
where $\bB_j^i=\{\bb\in \bB_j ~|~ \bb_e=i\}$ for $i\in\{0,1\}$ and $j\in\{1,2\}$
and $\times$ denotes the Cartesian product. 
\end{lemma}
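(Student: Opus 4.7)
The plan is to establish the equality by a vertex-level bijection: I would invoke Lemma~\ref{lem:2sum-minus}~(Case~2) to enumerate the bonds of $G_1 \ksum{e} G_2$, then verify that the corresponding augmented-bond vectors coincide with those appearing on the right-hand side. Since both sides are defined as convex hulls, it suffices to show that the sets of vertices (i.e., the characteristic vectors of bonds, augmented by the $E_1'\cup E_2'$ coordinates) agree.

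First, I would recall from Lemma~\ref{lem:2sum-minus}~(Case~2) that every bond of $G_1\ksum{e}G_2$ falls into one of three types: (a) $F\in\CC_{\bar e}(G_1)$, (b) $F\in\CC_{\bar e}(G_2)$, or (c) $F=F_1\cup F_2$ with $F_i\in\CC_e(G_i)$. For a type~(a) bond, both $u$ and $v$ sit on the same side, and since $V_1\cap V_2=\{u,v\}$, the entire vertex set $V_2$ lies on that single side; hence every edge in $E_2\setminus\{e\}$ and every non-edge in $E_2'$ has its endpoints on the same side, and the $e$-coordinate is $0$. Thus the augmented vector is the vertex $\bb\in\bB_1^0$ padded by zeros on the $E_2$- and $E_2'$-coordinates, which is exactly a row of $\begin{pmatrix}\bB_1^0 & \mathbf{0}\\ \mathbf{0} & \bB_2^0\end{pmatrix}$. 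Type~(b) is symmetric.

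For type~(c), let $A_i,B_i$ denote the sides of $F_i$ in $G_i$ with $u\in A_i$, $v\in B_i$. Then the two sides of the bond $F_1\cup F_2$ in $G_1\ksum{e}G_2$ are $A_1\cup A_2$ and $B_1\cup B_2$; intersecting with $V_i$ recovers precisely the partition $(A_i,B_i)$. Consequently, the $E_i$-coordinates of the augmented vector of $F_1\cup F_2$ restricted to $E_i$ equal those of $F_i$, the $E_i'$-coordinates of the augmented vector of $F_1\cup F_2$ on $E_i'$ equal those of $F_i$ in the augmented polytope of $G_i$, and the shared $e$-coordinate is $1$. The resulting vector is therefore the gluing (at the $e$-coordinate) of $\bb_1\in\bB_1^1$ and $\bb_2\in\bB_2^1$, i.e.\ a member of $\bB_1^1\times\bB_2^1$. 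The reverse direction runs in the same way: a row of the block-diagonal matrix supplies a bond of type~(a) or (b) via the corresponding vertex of $\abondp{G_i}{E_i'}$, and a member of $\bB_1^1\times\bB_2^1$ yields bonds $F_i\in\CC_e(G_i)$ whose union is a bond of $G_1\ksum{e}G_2$ by Lemma~\ref{lem:2sum-minus}; the same side-tracking argument confirms that all augmented coordinates match.

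The main (mild) obstacle is the bookkeeping for the augmented non-edge coordinates, since Lemma~\ref{lem:2sum-minus} only speaks about the edge-sets of bonds. I would isolate the single observation that drives everything: for a type~(c) bond $F_1\cup F_2$, the partition of $V_i$ induced in $G_1\ksum{e}G_2$ coincides with the partition induced by $F_i$ alone in $G_i$, because $V_1\cap V_2=\{u,v\}$ and each $F_i$ separates $u$ from $v$. Once this is noted, the agreement of every coordinate (edge, non-edge, and the shared $e$) is immediate, and the lemma follows by reading off coordinates from the constituent augmented bond polytopes.
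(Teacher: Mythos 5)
Your proof is correct and follows essentially the same route as the paper's: a case split on whether $u$ and $v$ lie on the same side or on different sides of the bond, justified by Lemma~\ref{lem:2sum-minus} (Case~2), followed by matching the vertex sets of the two convex hulls. Your explicit verification that the augmented non-edge coordinates agree (via the observation that the partition of $V_i$ induced by the combined bond coincides with that induced by $F_i$ in $G_i$) is a detail the paper leaves implicit, but it does not change the approach.
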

\begin{proof}
Let $e=\{u,v\}$. Any bond in $G=G_1\ksum{e}G_2$ either has $u,v$ in the same component or in different components. If $u,v$ are in the same component, then the bond is obtained either from a bond of $G_1$ by putting $G_2$ entirely in the component containing $u,v$ or from a bond of $G_2$ by putting $G_1$ entirely in the component containing $u,v$. 
If $u,v$ are in different components, then the bond is obtained from a bond $\bb_1$ of $G_1$ and a bond $\bb_2$ of $G_2$ such that $u,v$ are in different components of both $\bb_1$ and $\bb_2$ (cf. Lemma~\ref{lem:2sum-minus}, Case 2). \qed
\end{proof}

\subsection{Extension Complexity of Bond Polytope under $k$-sums
$\ksum{1}$, $\ksum{2}$ and $\ksum{2}^-$}

\begin{theorem}\label{thm:2sum_xc}
Let $G_1=(V_1,E_1)$ and $G_2=(V_2,E_2)$ be two graphs and let $E_1'\subseteq \binom{V_1}{2}\setminus E_1, E_2'\subseteq \binom{V_2}{2}\setminus E_2$. Let $\abondp{G_1}{E_1'}$, $\abondp{G_2}{E_2'}$ be their respective augmented bond polytopes. 
Then, 
{\small\[\xc{\abondp{G_1\ksum{k}G_2}{E_1'\cup E_2'}}\leqslant \xc{\abondp{G_1}{E_1'}}+\xc{\abondp{G_2}{E_2'}}+\Oh{1},\]}
for $k\in\{1,2\}$. Furthermore, given extended formulations for \abondp{G_1}{E_1'} and \abondp{G_2}{E_2'}, an extended formulation for \abondp{G_1\ksum{k}G_2}{E_1'\cup E_2'} can be constructed in linear time.
\end{theorem}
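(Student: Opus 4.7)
My plan is to apply Lemmas~\ref{lem:1sum-vertices} and~\ref{lem:2sum-vertices}, which give explicit vertex descriptions of the augmented bond polytope of $G_1\ksum{k}G_2$, together with Balas's theorem (Theorem~\ref{thm:balas}) and the Gluing Lemma (Lemma~\ref{lem:glued_product}).

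For $k=1$, Lemma~\ref{lem:1sum-vertices} directly gives $\abondp{G_1\ksum{v}G_2}{E_1'\cup E_2'}=\conv{\tilde{P}_1\cup\tilde{P}_2}$, where $\tilde{P}_i$ is $\abondp{G_i}{E_i'}$ lifted to the full coordinate space by padding with zeros on the coordinates of the other graph. Padding with zeros does not change extension complexity, so Balas's theorem on this two-term union immediately gives the claimed bound, with a linear-time construction by the ``Furthermore'' clause of Theorem~\ref{thm:balas}.

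For $k=2$, write $P_i:=\abondp{G_i}{E_i'}$ and split the vertex set given by Lemma~\ref{lem:2sum-vertices} according to the value of the glued coordinate $x_e$. The $x_e=0$ part, call it $F^0$, is the convex hull of the zero-lifts of the faces $P_i^0:=P_i\cap\{x_e=0\}$, so Balas yields $\xc{F^0}\leqslant \xc{P_1^0}+\xc{P_2^0}+2$; since $P_i^0$ is a face, $\xc{P_i^0}\leqslant \xc{P_i}$. The $x_e=1$ part, call it $F^1$, is the glued product $P_1^1\times_{x_e}P_2^1$ of the faces $P_i^1:=P_i\cap\{x_e=1\}$ over the single coordinate $x_e$; the hypothesis $x_e\leqslant 1$ of the Gluing Lemma is trivially valid on $0/1$-polytopes, giving $\xc{F^1}\leqslant \xc{P_1^1}+\xc{P_2^1}\leqslant \xc{P_1}+\xc{P_2}$. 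A further application of Balas to $\conv{F^0\cup F^1}$ then produces an extended formulation of $\abondp{G_1\ksum{e}G_2}{E_1'\cup E_2'}$, and every step is constructive in linear time.

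The main subtlety in the $k=2$ case is to avoid paying $\xc{P_i}$ \emph{twice}---once for the face $P_i^0$ and once for $P_i^1$---since the straightforward combination above would only yield a bound of the form $2\xc{P_1}+2\xc{P_2}+\Oh{1}$. To reach the sharper stated additive bound $\xc{P_1}+\xc{P_2}+\Oh{1}$, one has to share a single extended formulation of $P_i$ between the subdirect-sum construction for $F^0$ and the glued-product construction for $F^1$; I would attempt this by packaging both into a single disjunctive EF that introduces an indicator for the value of $x_e$, so that each $P_i$ is invoked only once. This reuse step is where I expect the proof must be most careful.
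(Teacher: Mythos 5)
Your $k=1$ argument is the paper's argument. For $k=2$, however, what you actually prove is only the weaker bound $\xc{\abondp{G_1\ksum{e}G_2}{E_1'\cup E_2'}}\leqslant 2\xc{P_1}+2\xc{P_2}+\Oh{1}$: you pay for each $P_i$ twice, once through the face $P_i^0$ in the Balas step for $F^0$ and once through the face $P_i^1$ in the glued product for $F^1$. You correctly diagnose this, but the repair you propose --- ``package both into a single disjunctive EF with an indicator for the value of $x_e$ so that each $P_i$ is invoked only once'' --- is precisely the content of the theorem and is left unexecuted, so this is a genuine gap rather than a finished proof. The doubling is also not a harmless constant in context: in Theorem~\ref{thm:abond_xc} the bound is invoked once per summand of the decomposition of Theorem~\ref{thm:decomp}, and a factor $2$ per $2$-sum compounds along the chain into an exponential blow-up, so the additive form is essential to the linear extension complexity result.

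For comparison, the paper's proof carries out exactly the reuse you gesture at, and it is short. For $i=1,2$ it builds $P'_{G_i}$ from $P_i:=\abondp{G_i}{E_i'}$ by (i) taking the glued product of $P_i$ with a line segment over the single coordinate $x_e$, which appends two flag coordinates $(w,z)$ whose values are a function of $x_e$ (with different assignments for $G_1$ and for $G_2$), costing $+\Oh{1}$ by Lemma~\ref{lem:glued_product}, and (ii) taking the convex hull with one additional all-zero point carrying the remaining flag value, representing ``this side of the sum is untouched,'' costing $+\Oh{1}$ by Theorem~\ref{thm:balas}. A single glued product of $P'_{G_1}$ and $P'_{G_2}$ over the two flag coordinates (the hypothesis $w+z\leqslant 1$ of Lemma~\ref{lem:glued_product} holds by construction) then reproduces exactly the vertex set of Lemma~\ref{lem:2sum-vertices}: the two mixed flag values pair the $x_e=0$ bonds of one graph with the all-zero point of the other, and the flag $(0,0)$ pairs the $x_e=1$ bonds, giving $\bB_1^1\times\bB_2^1$. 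Since each $P_i$ enters this construction exactly once, the bound is $\xc{P_1}+\xc{P_2}+\Oh{1}$, and every step is constructive in linear time. If you flesh out your indicator idea you will essentially be forced to this (or an equivalent) construction; without it, your writeup does not establish the stated inequality.
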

\begin{proof}
    For $k=1$, the result is an immediate corollary of Lemma \ref{lem:1sum-vertices} and Theorem~\ref{thm:balas}. For $k=2$, let $e$ be the edge along which the $2$-sum is taken, and let $d$ be the dimension 
    of $\abondp{G_1}{E_1'}$. 
    
    First, we assume that \abondp{G_1}{E_1'} is embedded in $\BR^{d+2}$. Call the extra two coordinates $w,z$ and the embedded polytope $P_{G_1}$. We assume that the following property holds for each $\bb\in\vertices{P_{G_1}}$:
    \[\begin{matrix*}
        (\bb)_e=0 & \implies & (\bb)_w=0, (\bb)_z=1\\ 
        (\bb)_e=1 & \implies & (\bb)_w=0, (\bb)_z=0 
    \end{matrix*}\]  
   This can be achieved by taking the glued product of $\abondp{G_1}{E_1'}$  with the line segment $S=\conv{\{(0,0,1),(1,0,0)\}}$, glueing coordinate $(\bx)_e$ in \abondp{G_1}{E_1'} with the first coordinate of the segment $S$. This results in an additive $\Oh{1}$ increase in the extension complexity due to 
   Lemma~\ref{lem:glued_product}. Next, we take the convex hull of the union of the resulting polytope and the point $(\mathbf{0},1,0)\in \BR^{\mathrm{d}+2}$ to obtain $P'_{G_1}$, again resulting in an $\Oh{1}$ additive increase in the extension complexity due to Theorem~\ref{thm:balas}. 
   Altogether, we have $\xc{P'_{G_1}}\leqslant \xc{\abondp{G_1}{E_1'}}+\Oh{1}.$

     Similarly, we obtain $P'_{G_2}$ from \abondp{G_2}{E_2'} by adding new coordinates $w,z$ first ensuring 
    \[\begin{matrix*}
        (\bb)_e=0 & \implies & (\bb)_w=1, (\bb)_z=0\\
        (\bb)_e=1 & \implies & (\bb)_w=0, (\bb)_z=0
    \end{matrix*}\]
    for each $\bb\in\vertices{P_{G_2}},$    
    and then taking the convex hull of the union of the resulting polytope with the point $(\mathbf{0},0,1).$ By the same arguments as for $P'_{G_1}$ we have that $\xc{P'_{G_2}}\leqslant \xc{\abondp{G_2}{E_2'}}+\Oh{1}.$

    Finally, we take the glued-product of $P'_{G_1}$ and $P'_{G_2}$ where the gluing is done over the $z,w$ coordinates in $P'_{G_1}$ with the $z,w$ coordinates in $P'_{G_2}$. The resulting polytope is an extended formulation of \abondp{G_1\ksum{2}G_2}{E_1'\cup E_2'} by Lemma~\ref{lem:2sum-vertices}, and by Lemma~\ref{lem:glued_product}, it has extension complexity at most $\xc{P'_{G_1}}+\xc{P'_{G_2}}+\Oh{1}$ which is $\xc{\abondp{G_1}{E_1'}}+\xc{\abondp{G_2}{E_2'}}+\Oh{1}.$ Note that all the steps in the proof are efficiently constructive so the resulting extended formulation can be constructed in linear time.\qed
\end{proof}

\begin{lemma}\label{lem:2sum_removed_xc_cases_1_2}
    Let $G_1=(V_1,E_1)$ and $G_2=(V_2,E_2)$ be two graphs such that $\{u,v\}= V_1\cap V_2$ and $\{uv\}= E_1\cap E_2$. Let $E_1'\subseteq \binom{V_1}{2}\setminus E_1$ and $E_2'\subseteq\binom{V_2}{2}\setminus E_2$. Suppose that $u$ and $v$ belong to the same connected component in $G_1\setminus uv$. 
    \begin{enumerate}
      \item If $G_2\setminus uv$ is disconnected, then
\vspace{-6pt}    
    \begin{multline*}
    \xc{\abondp{G_1\ksum{uv}^{-}G_2}{E_1'\cup E_2'\cup\{uv\}}} \leqslant \\\xc{\abondp{G_1\setminus uv}{E_1'\cup \{uv\}}}+\xc{\abondp{G_2}{E_2'}}+\Oh{1}.   \end{multline*}
    \item If $G_2\setminus uv$ is connected, then
\vspace{-6pt}    
    \begin{multline*}
        \xc{\abondp{G_1\ksum{uv}^{-}G_2}{E_1'\cup E_2'\cup\{uv\}}} \leqslant \\\xc{\abondp{G_1\setminus uv}{E_1'\cup \{uv\}}}+\xc{\abondp{G_2\setminus uv}{E_2'\cup\{uv\}}}+\Oh{1}.
    \end{multline*}
    \end{enumerate}
    Furthermore, in each of these cases the resulting extended formulation can be constructed in linear time given extended formulations for the appropriate polytopes.
\end{lemma}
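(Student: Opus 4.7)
The overall strategy is to invoke Lemma~\ref{lem:2sum-minus} to decompose the bond set of $G_1\ksum{uv}^{-} G_2$ in each case and then rebuild the corresponding augmented bond polytope using the same two tools as in the proof of Theorem~\ref{thm:2sum_xc}: Theorem~\ref{thm:balas} (to take convex hulls of unions of polytopes with $\Oh{1}$ overhead) and Lemma~\ref{lem:glued_product} (to combine polytopes over shared coordinates without any overhead). In Case~1 only the first tool is needed, while in Case~2 the augmented coordinate $\bx_{uv}$ of each factor will play exactly the role that the shared edge coordinate $\bx_e$ plays in the $k=2$ part of Theorem~\ref{thm:2sum_xc}.

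For Case~1, Case~3 of Lemma~\ref{lem:2sum-minus} gives $\CC(G_1\ksum{uv}^{-} G_2)=\CC(G_1\setminus uv)\cup \CC_{\bar e}(G_2)$. Bonds from $\CC(G_1\setminus uv)$ embed in the target coordinate space as vertices of $\abondp{G_1\setminus uv}{E_1'\cup\{uv\}}$ padded with zeros on the $E_2\setminus\{uv\}$ and $E_2'$ coordinates; padding does not affect extension complexity. Bonds from $\CC_{\bar e}(G_2)$ embed as the face $\{\bx\in\abondp{G_2}{E_2'}\mid \bx_{uv}=0\}$, where the edge coordinate $\bx_{uv}$ of $G_2$ is reinterpreted as the augmented $\bx_{uv}$ coordinate of the sum (both are forced to $0$), and the remaining $G_1$-side coordinates are set to zero. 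A face has extension complexity no larger than that of its ambient polytope, so this piece costs at most $\xc{\abondp{G_2}{E_2'}}$. Taking the convex hull of these two embedded polytopes via Theorem~\ref{thm:balas} yields the claimed bound.

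For Case~2, Case~4 of Lemma~\ref{lem:2sum-minus} together with the observation that $\CC_{\bar e}(G_i)=\CC_{\thinbar{uv}}(G_i\setminus uv)$ whenever $G_i\setminus uv$ is connected exhibits the bonds of $G_1\ksum{uv}^{-} G_2$ as coming from three disjoint families: bonds of $G_1\setminus uv$ with $u,v$ on the same side (augmented $\bx_{uv}=0$), bonds of $G_2\setminus uv$ with $u,v$ on the same side (again augmented $\bx_{uv}=0$), and pairwise unions $F_1\cup F_2$ where each $F_i$ is a bond of $G_i\setminus uv$ having $u,v$ on opposite sides (augmented $\bx_{uv}=1$ on both). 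This is exactly the trichotomy that drives the $k=2$ case of Theorem~\ref{thm:2sum_xc}, with the augmented coordinate $\bx_{uv}$ of each $\abondp{G_i\setminus uv}{E_i'\cup\{uv\}}$ taking the place of $\bx_e$ there. We therefore apply the same construction verbatim: extend each of the two polytopes by two auxiliary coordinates $w,z$ via a glued product with a line segment whose endpoints are keyed off the two possible values of $\bx_{uv}$, form a convex hull with one extra vertex to cover the case in which the given side contributes no edges, and finally take the glued product of the resulting two polytopes over $w,z$ (the augmented $\bx_{uv}$ coordinates on the two sides are forced to agree and can then be identified). By Theorem~\ref{thm:balas} and Lemma~\ref{lem:glued_product} each step costs only an additive $\Oh{1}$, giving the bound stated in the lemma.

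The main work, if any, is the bookkeeping in Case~2: one must verify that the automatic consistency of the $\bx_{uv}$ values across the glued pairs produces exactly the three bond families above and no spurious combinations, so that the resulting glued product is indeed an extended formulation of $\abondp{G_1\ksum{uv}^{-} G_2}{E_1'\cup E_2'\cup\{uv\}}$. This verification is essentially identical to the one already implicit in the proof of Theorem~\ref{thm:2sum_xc}, so no new ideas are required. All building blocks used (Balas's union, gluing with a segment, final glued product) are linear-time constructive once extended formulations of the inputs are given explicitly, so the resulting extended formulation is itself constructible in linear time.
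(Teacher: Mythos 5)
Your Case~1 argument follows the paper's route (Lemma~\ref{lem:2sum-minus}, Case~3, embedded copies of the two polytopes, then Theorem~\ref{thm:balas}) and is essentially sound, up to one slip: for a bond of $G_1\setminus uv$ that separates $u$ from $v$, the two components of $G_2\setminus uv$ end up on opposite sides, so an augmented coordinate $x_{xy}$ with $xy\in E_2'$ joining those two components takes the value $1$, not $0$. The embedding is still affine (each such coordinate is a copy of the $x_{uv}$ coordinate of $\abondp{G_1\setminus uv}{E_1'\cup\{uv\}}$), so the bound survives, but ``padded with zeros on the $E_2'$ coordinates'' is not literally correct.

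The genuine gap is in Case~2, precisely in the step you describe as bookkeeping. Your trichotomy rests on the claim that $\CC_{\bar e}(G_i)=\CC_{\thinbar{uv}}(G_i\setminus uv)$ when $G_i\setminus uv$ is connected; only the inclusion $\CC_{\thinbar{uv}}(G_i\setminus uv)\subseteq\CC_{\bar e}(G_i)$ holds, and Lemma~\ref{lem:2sum-minus}, Case~4, uses $\CC_{\bar e}(G_i)$ (bonds of $G_i$ with the edge $uv$ available) precisely because a bond of the sum with $u,v$ on the same side may have its $\{u,v\}$-side held together only through the other summand, so its restriction need not be a bond of $G_i\setminus uv$. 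Concretely, take $G_1=K_4$ on $\{u,v,a,b\}$ and $G_2$ the triangle on $\{u,v,w\}$; all hypotheses of Part~2 hold, and $(\{u,v,w\},\{a,b\})$ is a bond of $G_1\ksum{uv}^{-}G_2$ with edge set $\{ua,ub,va,vb\}$, augmented $x_{uv}=0$ and all $G_2$-coordinates zero. This vertex lies in none of your three families, since $\{ua,ub,va,vb\}$ is not a bond of $K_4\setminus uv$ (the side $\{u,v\}$ is disconnected there); as a $0/1$ point lies in the convex hull of a set of $0/1$ points only if it belongs to that set, the polytope your glued product projects to omits this vertex, so it is not an extended formulation of $\abondp{G_1\ksum{uv}^{-}G_2}{E_1'\cup E_2'\cup\{uv\}}$ (the bond $(\{u,v,a,b\},\{w\})$ is missed symmetrically on the $G_2$ side). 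This is not repaired by a more careful verification with the stated building blocks, because $\abondp{G_i\setminus uv}{E_i'\cup\{uv\}}$ simply has no vertices corresponding to these exceptional bonds. The natural fix is to run the Theorem~\ref{thm:2sum_xc} construction on $\abondp{G_1}{E_1'}$ and $\abondp{G_2}{E_2'}$, reading the edge coordinate $x_{uv}$ of each as the augmented coordinate of the sum (its $x_{uv}=0$ vertices are exactly $\CC_{\bar e}(G_i)$, and its $x_{uv}=1$ vertices coincide coordinate-wise with those of $\CC_{uv}(G_i\setminus uv)$); this gives an additive bound, but with $\xc{\abondp{G_i}{E_i'}}$ on the right-hand side, so you would still have to reconcile it with the polytopes named in the statement, something your proposal does not address.
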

\begin{proof}
    The claim of Part 1 follows immediately from the characterization in Lemma \ref{lem:2sum-minus} (Case 3) and Theorem~\ref{thm:balas}. For Part 2, we note the characterization in Lemma \ref{lem:2sum-minus} (Case 4) and observe that the case is analogous to that in Theorem~\ref{thm:2sum_xc} and hence an analogous proof yields the result.
    \qed
\end{proof}

\subsection{Extension~Complexity~of~the~Bond~Polytope~for~$(K_5\setminus e)$-minor-free~Graphs}

\begin{lemma}
\label{lem:abond_xc_wheel}
    Let $G=(V,E)$ be a graph and let $E'\subseteq \binom{V}{2}\setminus E$ such that $G'=(V,E\cup E')$ is a wheel graph. Then, $\xc{\abondp{G}{E'}}=\Oh{|V|}.$ Furthermore, such an extended formulation can be constructed in time $\Oh{|V|}.$
\end{lemma}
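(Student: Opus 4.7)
The plan is to exploit the simple combinatorial structure of bonds in a wheel. Let $G' = (V, E \cup E')$ be the wheel $W_n$ with hub $c$ and rim $v_0, \ldots, v_{n-1}$. Every bond $(S, V\setminus S)$ of $G'$ --- and thus every bond of the subgraph $G$ --- has (up to exchanging sides) $c \in S$, so that $V\setminus S$ is a contiguous arc $A$ of the rim cycle, possibly empty or the whole rim. For the subgraph $G \subseteq G'$, the bonds of $G$ are in bijection with those arcs $A$ for which both $G[A]$ and $G[V\setminus A]$ are connected in $G$: a missing rim edge in $E'$ forbids arcs that straddle it, and a missing spoke in $E'$ forbids certain arcs whose hub-side would thereby become disconnected through a lone rim neighbour. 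In particular, the family of bonds of $G$ has size $O(|V|^2)$ but lies inside the ambient family of all arcs of the rim cycle.

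With this characterization in hand, I would build a layered directed acyclic graph $D$ with $O(|V|)$ vertices and edges whose $s$--$t$ paths are in bijection with the admissible arcs. Layer $i$ of $D$ corresponds to rim position $v_i$ and carries three states --- \emph{before the arc}, \emph{inside the arc}, \emph{after the arc} --- with forward edges encoding the allowed transitions of this state. Wrap-around arcs (those straddling the edge $(v_{n-1}, v_0)$) are accommodated either by a second symmetric copy of $D$ combined via Theorem~\ref{thm:balas}, or by seeding $D$ with an extra ``already inside'' source. Each restriction imposed by $E'$ simply deletes $O(1)$ forbidden edges of $D$ per missing wheel edge, so the total size of $D$ remains $O(|V|)$. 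The standard flow LP on $D$, with $O(|V|)$ flow variables and $O(|V|)$ flow-conservation constraints, then projects onto the polytope of arc-indicator vectors.

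Each coordinate of $\abondp{G}{E'}$ --- whether it indexes an edge of $E$ or of $E'$ --- is a spoke or rim edge of $G'$, and can be written as a linear function of the flow variables of $D$: a spoke coordinate $x_{(v_i,c)}$ equals the flow entering the ``inside'' vertex of layer $i$, and a rim-edge coordinate $x_{(v_i,v_{i+1})}$ equals the sum of flow through the $O(1)$ DAG edges that record a state transition between layers $i$ and $i+1$. Adding these $O(|V|)$ linear equations to the flow LP yields an extended formulation of $\abondp{G}{E'}$ of size $O(|V|)$, constructible in $O(|V|)$ time. The main obstacle is the careful case analysis handling wrap-around arcs together with the subgraphs $G$ where some spokes are missing, since a missing spoke creates a global connectivity constraint on the hub-side; the expected payoff is that this constraint still localises into $O(1)$ forbidden DAG edges per missing wheel edge, preserving the linear size and linear construction time.
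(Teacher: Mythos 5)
Your strategy---encoding the rim arcs cut off by a bond as $s$--$t$ paths of a linear-size layered DAG and using the integral path/flow LP, with the coordinates of $\abondp{G}{E'}$ written as linear functions of the flow variables---is genuinely different from the paper's proof, which instead inducts on the rim, deleting a degree-two rim vertex and re-attaching it via a constant-size glued product (Lemma~\ref{lem:glued_product}) plus one application of Theorem~\ref{thm:balas}. However, as written your argument has a real gap, and it sits exactly where you yourself flag ``the main obstacle'': the hub-side connectivity constraint created by missing spokes is \emph{not} captured by a three-state (before/inside/after) DAG in which each missing wheel edge merely deletes $O(1)$ DAG edges. Concretely, suppose the spokes at rim positions $1,\ldots,5$ lie in $E'$ while every rim edge is present. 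Then the hub side $B$ (the complement of the arc $A$) induces a connected graph together with $c$ if and only if $B$ contains at least one vertex outside $\{1,\ldots,5\}$, so the forbidden arcs are exactly those whose complement is a nonempty subarc of $[1,5]$. In your wrap-around copy these are the paths that exit the arc at position $i$ and re-enter at position $j+1$ with $1\leq i\leq j\leq 5$; any single edge deletion removes \emph{all} paths through that edge, hence deleting any exit, re-enter, or ``stay outside'' edge in that range also kills admissible arcs (e.g.\ exit at $i\leq 5$, re-enter after position $6$, or $B=[0,j]$). The constraint couples the entry and exit transitions, so no set of edge deletions in the three-state DAG realizes exactly the admissible family; your claim that each restriction of $E'$ localises to $O(1)$ deleted edges is false for missing spokes.

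The construction is salvageable, but this is the step that actually needs proof. One repair: enrich the per-layer state by a constant number of bits recording whether the current hub-side run (since the last missing rim edge, or since leaving the arc) has already met a present spoke, allow a run to be closed (by a missing rim edge, by re-entering the arc, or at the end of the scan) only from the ``spoke seen'' state, and handle the cyclic wrap-around of hub-side runs---not only of the arc itself---by conditioning on the state at the cut position, i.e.\ by taking $O(1)$ copies of the DAG combined via Theorem~\ref{thm:balas}; the trivial bond is added as one extra point. Each missing wheel edge then only modifies the transitions at its own layer, so the DAG still has size $\Oh{|V|}$ and is built in time $\Oh{|V|}$, and the rest of your argument (integrality of the flow polytope, spoke coordinates as flow into ``inside'' states, rim coordinates as flow across boundary transitions) goes through. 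With that fix your route gives an alternative, non-inductive proof of the lemma; without it, the key claim is asserted rather than established.
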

\begin{proof}
We prove the lemma by induction on the number of vertices on the rim. For any constant $n$ we have a constant size graph and so it has a constant number of bonds and the extension complexity of any augmented bond polytope is a constant.

Suppose the claim holds for $n$. That is, for any subgraph of a wheel $W_n$ with center $c$ and rim vertices $0,\ldots, n$ the extension complexity of the corresponding augmented bond polytope (with the non-edges as augmented coordinates) is $\Oh{n}.$ Now consider a subgraph $G_{n+1}$ of $W_{n+1}$ with a set of non-edges $\overline{E}_{n+1}.$ 

Either there exists a rim-vertex with a degree strictly less than three, or the given graph is the wheel itself and there are no augmented coordinates and thus the extension complexity of $\abondp{G}{\overline{E}_{n+1}}$ is linear and can be explicitly described \cite{CJN:23}. Similarly, either there exists a rim-vertex with a degree strictly more than one, or the given graph is a star and the augmented coordinates correspond to the cycle $0,\ldots, n+1$. The star has only a linear
number of bonds - one for each ray being cut off. So an extended formulation can be constructed in linear time by Theorem \ref{thm:balas}, using each bond explicitly.

Therefore, without loss of generality, we can assume that there is a rim vertex that has a degree exactly two. For simplicity, we assume that this vertex is labelled $n+1$ even though for the construction of an extended formulation the actual label of such a vertex can be directly used. We distinguish the following cases: 
\begin{enumerate}
    \item[1.] edges $\{n,n+1\}, \{c,n+1\}$ are present while the edge $\{0,n+1\}$ is absent, 
    \item[2.] edges $\{0,n+1\}, \{c,n+1\}$ are present while the edge $\{n,n+1\}$ is absent, 
    \item[3.] edges $\{0,n+1\}, \{n,n+1\}$ are present while the edge $\{c,n+1\}$ is absent. 
\end{enumerate}
    Cases 1 and 2 are identical except for vertex labeling so we will consider
    only case 1. We construct a subgraph $G_n$ of $W_n$ as follows. We remove
    vertex $n+1$. For each $i\in\{0,n\}$, we keep $\{c,i\}$ as an edge 
    in $G_n$ if it was an edge in $G_{n+1}$, otherwise we keep it as a non-edge
    in $\overline{E}_n$. Similarly for $\{i,i+1\}$, for $i\in\{0,n-1\}$.
    Finally, $\{0,n\} \in \overline{E}_n.$ By our inductive hypothesis the
    augmented bond polytope $\abondp{G_n}{\overline{E}_n}$ has extension
    complexity $\Oh{n}.$ 
    
    Note that any bond in $G_{n+1}$ must have vertex $n+1$ in the same component
    as either $c$ or $n$. The only exception is the bond that has vertex $n+1$
    as one component and the rest of the graph in the other component. Since,
    every unexceptional bond in $G_{n+1}$ corresponds to a bond in $G_n$
    and there are only finitely many types of bonds -- in how they appear at the
    vertices $c, n, n+1, $ and $0$, we glue extra coordinates onto
    $\abondp{G_n}{\overline{E}_n}$ encoding this. More specifically, consider
    the glued product of $\abondp{G_n}{\overline{E}_n}$ over the coordinates
    $x_{c,n},x_{c,0},x_{0,n}$ with Polytope \ref{polytope1}.
     \begin{equation*}
          {\begin{pmatrix} x_{c,n} & x_{0,n} & x_{c,0} & x_{n,n+1} &
        x_{n+1,0} & x_{c,n+1} \\ 0 & 0 & 0 & 0 & 0 & 0 \\ 0 & 1 & 1 & 0 & 1 & 0
        \\ 1 & 0 & 1 & 1 & 1 & 0 \\ 1 & 0 & 1 & 0 & 0 & 1 \\ 1 & 1 & 0 & 1 & 0 &
        0 \\ 1 & 1 & 0 & 0 & 1 & 1 \end{pmatrix}}
    \end{equation*}
    \vspace{-.4cm}
    \captionof{mat}{Vertices}\label{polytope1}  
    Finally, we add the single exceptional bond and the resulting polytope is an extended formulation for $\abondp{G_{n+1}}{\overline{E}_{n+1}}$.

    Similarly for case 3, we obtain $G_{n}$ by keeping edges/non-edges as they are in $G_{n+1}$ over vertices $c,0,\ldots,n$ and $\{0,n\}\in E(G_n).$ Similar to the previous case we glue extra coordinates to extend bonds in $G_n$ to bonds in $G_{n+1}$. We see that the polytope needed for glueing is Polytope \ref{polytope2}.  
      \begin{equation*}
          {\begin{pmatrix} x_{c,n} & x_{0,n} & x_{c,0} & x_{n,n+1} &
        x_{n+1,0} & x_{c,n+1} \\ 0 & 0 & 0 & 0 & 0 & 0 \\ 0 & 1 & 1 & 0 & 1 & 0
        \\ 1 & 0 & 1 & 1 & 1 & 0 \\ 1 & 0 & 1 & 0 & 0 & 1 \\ 1 & 1 & 0 & 1 & 0 &
        0 \\ 1 & 1 & 0 & 0 & 1 & 1 \end{pmatrix}}
      \end{equation*}
      \vspace{-.4cm}
      \captionof{mat}{Vertices}\label{polytope2}

    Finally, we add the single exceptional bond and the resulting polytope is an extended formulation for $\abondp{G_{n+1}}{\overline{E}_{n+1}}$.

    Thus, we see that $\xc{\abondp{G_{n+1}}{\overline{E}_{n+1}}}\leqslant\xc{\abondp{G_n}{\overline{E}_n}}+\Oh{1}.$ Furthermore, observe that a degree two vertex in a subgraph of a wheel can be found in linear time so the inductive step can be performed with a constant overhead.
    This concludes the proof of the inductive step.
    \qed
\end{proof}

\begin{theorem}\label{thm:abond_xc}
    Let $G=(V,E)$ be a graph and let $E'\subseteq \binom{V}{2}\setminus E$ be such that the graph $G'=(V,E\cup E')$ does not contain $(K_5\setminus e)$ as a minor. Then, $\xc{\abondp{G}{E'}}=\Oh{|V|}.$ Furthermore, such an extended formulation can be constructed in time $\Oh{|V|}.$
\end{theorem}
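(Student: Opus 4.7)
The plan is to apply Theorem~\ref{thm:decomp} to $G'=(V,E\cup E')$ to obtain a decomposition $G'=G_1\oplus^1\cdots\oplus^{\ell-1}G_\ell$ with $\ell=\Oh{|V|}$ in linear time, where each $G_i$ is a wheel graph, Prism, $K_2$, $K_3$, or $K_{3,3}$, and each $\oplus^i$ is $\ksum{1}$, $\ksum{2}$, or $\ksum{2}^{-}$; one then builds an extended formulation of $\abondp{G}{E'}$ block-by-block along the induced tree structure, using Theorem~\ref{thm:2sum_xc} and Lemma~\ref{lem:2sum_removed_xc_cases_1_2} to glue. The edges of each block $G_i$ fall into three groups: real edges of $G$ (those in $E$), augmented non-edges (those elements of $E'$ whose endpoints both lie in $V(G_i)$), and clique-edges of $\ksum{2}^{-}$ gluings involving $G_i$ that are absent from $G'$ entirely. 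The last group is carried as extra augmented coordinates during the recursion and projected away at the very end, which cannot increase extension complexity.

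I would induct on $\ell$. The base case $\ell=1$ is handled by Lemma~\ref{lem:abond_xc_wheel} when $G_1$ is a wheel, and by the fact that $\abondp{G_1}{E'}$ lives in constant dimension (hence has constant extension complexity) when $G_1$ is Prism, $K_2$, $K_3$, or $K_{3,3}$. For the inductive step, pick any leaf $G_\ell$ of the decomposition tree and write $G'=G''\oplus^{\ell-1}G_\ell$, where $G''$ is obtained from the remaining $\ell-1$ blocks. If $\oplus^{\ell-1}\in\{\ksum{1},\ksum{2}\}$, Theorem~\ref{thm:2sum_xc} gives an extended formulation of size at most $\xc{\abondp{G''}{E''}}+\xc{\abondp{G_\ell}{E_\ell'}}+\Oh{1}$, where $E''$ and $E_\ell'$ are the parts of $E'$ (together with any already-tracked vanished glue edges) that live in $V(G'')$ and $V(G_\ell)$, respectively. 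If $\oplus^{\ell-1}=\ksum{uv}^{-}$, Lemma~\ref{lem:2sum_removed_xc_cases_1_2} yields the same bound after adjoining $\{uv\}$ to both recursive augmented sets. Applying the inductive hypothesis to $G''$ and the base case to $G_\ell$, the estimate telescopes to $\Oh{\sum_i|V(G_i)|}+\Oh{\ell}=\Oh{|V|}$, using the standard inequality $\sum_i|V(G_i)|\leqslant|V|+2(\ell-1)$ for clique-sum decompositions.

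The main obstacle is consistent bookkeeping of each block's edges, together with checking that for wheel blocks the three groups together recover the wheel's full edge set (so that Lemma~\ref{lem:abond_xc_wheel} applies). This is automatic: every edge of $G_i$ is either preserved in $G'$ (and so lies in $E$ or $E'$) or is a glue edge of a $\ksum{2}^{-}$ operation involving $G_i$; in the latter case the edge is shared with exactly one adjacent block in the decomposition tree, matching the hypothesis of Lemma~\ref{lem:2sum_removed_xc_cases_1_2}. Linear-time constructibility follows because Theorem~\ref{thm:decomp} yields the decomposition in linear time, Lemma~\ref{lem:abond_xc_wheel} builds each wheel block's extended formulation in time linear in its size, each constant-sized block takes constant time, and each of the $\Oh{|V|}$ gluing steps is a constant-time update to the running extended formulation via Theorem~\ref{thm:2sum_xc} or Lemma~\ref{lem:2sum_removed_xc_cases_1_2}.
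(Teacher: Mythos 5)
Your proposal is correct and follows essentially the same route as the paper: decompose via Theorem~\ref{thm:decomp}, induct on the number of blocks with Lemma~\ref{lem:abond_xc_wheel} and constant-size blocks as the base case, and glue with Theorem~\ref{thm:2sum_xc} and Lemma~\ref{lem:2sum_removed_xc_cases_1_2}, carrying removed glue edges as augmented coordinates; your bookkeeping of the three edge groups is in fact a slightly more explicit account of what the paper does implicitly. The only detail the paper treats that you omit is the trivial case of a disconnected $G$ (at most one bond), which is immediate.
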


\begin{proof}
If $G$ is not connected then depending on whether it has two or more connected components, there is either only the trivial bond or no bonds. Therefore we may assume that $G$ is connected. Now, by Theorem \ref{thm:decomp}, $G=G_1 \oplus^1 \dots \oplus^{\ell-1} G_\ell$ where each
$G_i$ is isomorphic to a wheel graph, $\prism$, $K_2$, $K_3$, or $K_{3,3}$,
and each operation $\oplus^i$ is $\oplus_1$, $\oplus_2$ or $\oplus_2^-$. We prove the claim by induction on $\ell$.

If $\ell=1$ then $G$ is either $K_2$, $K_3$, $K_{3,3}$, $\prism$ or a wheel graph.
 If $G$ is either $K_2$, $K_3$, $K_{3,3}$ or $\prism$, then it has constant size and hence a constant number of bonds. Thus, the augmented bond polytope $\abondp{G}{E'}$ has constant size. If $G$ is a wheel, then applying Lemma~\ref{lem:abond_xc_wheel} gives us the desired result.  

For the inductive step, let $G'=G_1 \oplus^1 \dots \oplus^{\ell-2} G_{\ell-1}$. Then $G=G'\oplus^{\ell-1} G_\ell$. 
 If $G=G'\ksum{1}G_\ell$, then Lemma \ref{lem:1sum-vertices} 
 with Theorem \ref{thm:balas} gives us the desired result. 
 If $G=G'\ksum{e}G_\ell$, then Theorem \ref{thm:2sum_xc} gives us the desired result. 
 Finally, if $G=G'\ksum{e}^{-}G_\ell$, then Lemma \ref{lem:2sum_removed_xc_cases_1_2}, Part 1 or Part 2 --
  depending on how the end vertices of $e$ are connected in $G_1\setminus e$ and $G_2\setminus e$ -- gives us the desired result; note that $\xc{\abondp{G}{E\cup\{e\}}}\geqslant\xc{\abondp{G}{E}}.$
  \qed
\end{proof}

Applying Theorem \ref{thm:abond_xc} to $(K_5\setminus e)$-minor-free graphs together with $E'=\emptyset$ we get the following result.

\begin{theorem}
    Let $G=(V,E)$ be a $(K_5\setminus e)$-minor-free graph. Then,  
    $$\xc{\bondp{G}}= \Oh{|V|} \ .$$ 
    Moreover, this extended formulation can be constructed in time $\Oh{|V|}.$
\end{theorem}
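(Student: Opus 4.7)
The plan is to derive this theorem as an immediate instantiation of Theorem~\ref{thm:abond_xc}. The key observation is that the augmented bond polytope $\abondp{G}{E'}$ reduces to the ordinary bond polytope $\bondp{G}$ exactly when the set of augmented non-edge coordinates is empty, i.e., when $E' = \emptyset$: in that case, the vertex vectors of $\abondp{G}{\emptyset}$ live in $\mathbb{R}^{|E|}$ and are precisely the characteristic vectors of bonds of $G$, which is the defining set of $\bondp{G}$.

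With this identification in hand, I would set $E' = \emptyset$ and check that the hypothesis of Theorem~\ref{thm:abond_xc} is satisfied: the graph $G' = (V, E \cup E') = (V, E) = G$ is by assumption $(K_5 \setminus e)$-minor-free, so $G'$ vacuously does not contain $K_5 \setminus e$ as a minor. Applying Theorem~\ref{thm:abond_xc} then yields $\xc{\abondp{G}{\emptyset}} = \Oh{|V|}$, together with a matching linear-time construction of an explicit extended formulation. Combining the two yields $\xc{\bondp{G}} = \Oh{|V|}$ with the same constructive time bound.

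There is essentially no obstacle here since all the work has already been carried out in the preceding sections: the decomposition of $G$ via Theorem~\ref{thm:decomp} into wheels, $\prism$, $K_2$, $K_3$, and $K_{3,3}$ glued by $\ksum{1}, \ksum{2}, \ksum{2}^-$, the wheel base case via Lemma~\ref{lem:abond_xc_wheel}, and the inductive combination via Lemma~\ref{lem:1sum-vertices}, Theorem~\ref{thm:2sum_xc}, and Lemma~\ref{lem:2sum_removed_xc_cases_1_2}. Thus the proof reduces to a single sentence invoking Theorem~\ref{thm:abond_xc} with $E' = \emptyset$, observing that $\abondp{G}{\emptyset} = \bondp{G}$, and noting that the linear-time constructibility is inherited directly from that theorem.
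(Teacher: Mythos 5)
Your proof is correct and is exactly the paper's argument: the paper derives this theorem in one line by applying Theorem~\ref{thm:abond_xc} with $E'=\emptyset$ and noting that $\abondp{G}{\emptyset}=\bondp{G}$, which is precisely your instantiation, including the inherited linear-time constructibility.
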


\section{Linear Time Algorithm for~$(K_5\setminus e)$-minor-free Graphs}
\floatname{algorithm}{Procedure}
To solve the \MaxBond problem on $(K_5\setminus e)$-minor-free graphs,
we use the same framework 
as Chimani et al.~\cite{CJN:23} did, based on the 
fact that 
decomposition of a graph into $3$-connected components can be constructed in linear
time due to the algorithm of Hopcroft and Tarjan~\cite{HopcroftT:73}.
The important key difference is that
for the wheel graph, we describe a relatively simple linear time algorithm whereas
Chimani et al. rely on the algorithm for the construction of maximum bonds on
graphs of bounded treewidth~\cite{DLPSS:19}. 
The main idea of our algorithm is simple - 
to mimic Kadane's dynamic programming approach~\cite{Bentley:84} for the Maximum sum 
subarray problem in a slightly more complicated setting. 
\begin{theorem}\label{thm:wheel}
The weighted \MaxBond problem can be solved in time $O(n)$ for the wheel graph $W_n$.
\end{theorem}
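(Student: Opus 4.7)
\bigskip

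\noindent\textbf{Proof plan.} The plan is to exploit a clean structural description of bonds in $W_n$ and then reduce the weighted maximization to a one-dimensional dynamic program of Kadane type. Since the hub $c$ is adjacent to every rim vertex, in any cut $(S,V\setminus S)$ of $W_n$ the side containing $c$ is automatically connected. The opposite side induces a subgraph of the rim cycle, which is connected if and only if it is a single contiguous rim arc. Hence bonds of $W_n$ correspond bijectively to nonempty contiguous rim arcs $A$ (of size between $1$ and $n$), where the side not containing $c$ is $A$. The weight of the associated bond equals
\[
W(A) \;=\; \sum_{k\in A} w(k,c) \;+\; w(\partial^{-}A) \;+\; w(\partial^{+}A),
\]
where $\partial^{-}A,\partial^{+}A$ are the (at most two) rim edges on the boundary of $A$, present precisely when $A \neq V(\text{rim})$.

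To break the cyclic structure I would fix vertex $0$ and split into three cases: (i) $0\notin A$, so $A=[l,r]$ for some $1\le l\le r\le n-1$; (ii) $0\in A$ and $A\neq V(\text{rim})$, in which case the rim-complement of $A$ is a linear subarray $[l,r]\subseteq\{1,\dots,n-1\}$; (iii) $A=V(\text{rim})$, giving a single trivial bond of weight equal to the total spoke weight $W_{\mathrm{sp}}=\sum_{k=0}^{n-1}w(k,c)$. In case (i) the objective to maximize over $1\le l\le r\le n-1$ is
\[
F_{\mathrm{A}}(l,r) \;=\; w(l{-}1,l) \;+\; w(r,r{+}1) \;+\; \sum_{k=l}^{r} w(k,c),
\]
and in case (ii), using $A=V(\text{rim})\setminus[l,r]$, a short calculation gives
\[
F_{\mathrm{B}}(l,r) \;=\; W_{\mathrm{sp}} \;-\; \sum_{k=l}^{r} w(k,c) \;+\; w(l{-}1,l) \;+\; w(r,r{+}1),
\]
so both cases have essentially the same form, case (ii) being case (i) with $w(k,c)$ replaced by $-w(k,c)$ plus an additive constant $W_{\mathrm{sp}}$.

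Finally I would run a Kadane-style DP on $\{1,\dots,n-1\}$ for each of the two cases. For case (i), define
\[
g(r) \;=\; \max_{1\le l\le r}\Bigl( w(l{-}1,l) \;+\; \sum_{k=l}^{r} w(k,c)\Bigr),
\]
with base $g(1)=w(0,1)+w(1,c)$ and recurrence $g(r)=w(r,c)+\max\bigl(g(r-1),\, w(r{-}1,r)\bigr)$, corresponding to the choice of either extending the current subarray or starting a fresh one at $l=r$. The maximum of $F_{\mathrm{A}}$ is then $\max_{1\le r\le n-1}\bigl(g(r)+w(r,r{+}1)\bigr)$, computed in $O(n)$. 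Case (ii) is analogous with the sign flip. Comparing the best value from cases (i), (ii), and the single value of case (iii) yields the optimum in $O(n)$. The main subtlety—and the reason this is \emph{slightly} more complicated than textbook Kadane—is the two boundary rim-edge terms $w(l{-}1,l)$ and $w(r,r{+}1)$: the left boundary must be absorbed into the state variable $g(r)$ (hence the ``start fresh'' branch using $w(r{-}1,r)$), while the right boundary is added only at the outer maximization step. Once this bookkeeping is handled, correctness follows directly from the bond/arc bijection, and the cyclic wrap-around is taken care of by the three-case split.
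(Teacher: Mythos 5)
Your proposal is correct. The bond/arc bijection (the side containing the hub is automatically connected, the other side must be a contiguous rim arc), the weight formulas $F_{\mathrm{A}}$, $F_{\mathrm{B}}$, the base case and the recurrence $g(r)=w(r,c)+\max\bigl(g(r-1),\,w(r-1,r)\bigr)$ all check out, the three cases (arc avoiding $0$, arc properly containing $0$, full rim) exhaust all bonds, and tracking argmax indices recovers the optimal bond itself, so the whole computation is $O(n)$.

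Your route differs from the paper's in how the cyclic wrap-around is broken. The paper partitions the non-trivial bonds into three families by which rim edges near vertices $0$ and $1$ the cut-off path uses: two ``rotated'' families of paths confined to a linear segment of the rim, each handled by a Kadane-type forward scan (\textsc{Best-Bond-1}), plus the family of paths containing the edge $\{0,1\}$, which requires a separate procedure (\textsc{Best-Bond-3}) that computes prefix-best values from the left and from the right and maximizes $\pbr(i)+\pbl(i)$ over the vertex $i$ omitted from the path. Your complementation trick --- parametrizing an arc through $0$ by its rim-complement, which turns case (ii) into case (i) with spoke weights negated plus the constant $W_{\mathrm{sp}}$ --- replaces that third, structurally different pass by a second run of the very same linear DP. A further benefit is that your recurrence is an exact identity valid for arbitrary (even negative) weights, with the left boundary edge absorbed into the state and the right boundary edge added only at the outer maximization, so no reset-threshold bookkeeping is needed; the paper's version carries both boundary edges in its \po{} quantity and resets based on a sign condition. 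Both approaches run in linear time; yours uses a single DP routine (run twice) plus one comparison with the trivial bond, which is arguably the more economical presentation, while the paper's prefix/suffix combination mirrors the classical ``maximum subarray on a circle'' treatment more literally.
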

\begin{proof}
Given a weighted wheel graph $W_n$, for notational simplicity, we also use
the following notation: for
$i=0,1,\ldots,n-2$, $a_i=w(i,c)$, $b_i=w(i,i+1)$, and
$a_{n-1}=w(n-1,c)$ and $b_{n-1}=w(n-1,0)$.
\begin{figure}[bth]
\centering
\includegraphics[scale=0.35]{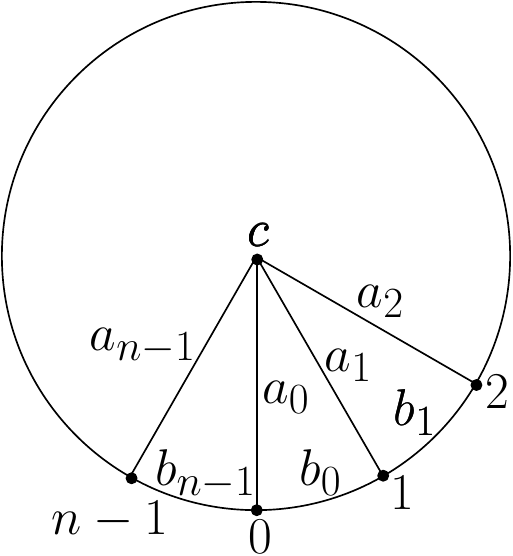}
\caption{The wheel graph $W_n$}
\end{figure}

Given a bond $(S,V\setminus S)$ of $W_n$, its two connected components 
have a very
special form: either one of them is the hub vertex $c$ and the other consists of
all the vertices on the rim - such a bond is called the {\em trivial bond}, 
or one of the connected components is a path on at most $n-1$ vertices of the rim,
denoted $P_S$ in the following,
and the other component consists of all the other vertices which is a fan graph. 

Let $\CF$ be the set of all non-trivial bonds in $W_n$.
Consider the partition of $\CF$ into $\CF_1\cup \CF_2\cup \CF_3$ defined 
below; we refer to bonds in $\CF_i$ as \emph{bonds of type $i$}:
\begin{align*}
\CF_1 &=\{S\in \CF \ | \ \mbox{the path $P_S$ does not contain the edges $\{n-1,0\}$ and $\{0,1\}$}\}\\
\CF_2 &=\{S\in \CF \ | \ \mbox{the path $P_S$ does not contain the edges $\{0,1\}$ and $\{1,2\}$}\}\\
\CF_3 &=\{S\in \CF \ | \ \mbox{the path $P_S$ contains the edge $\{0,1\}$}\}
\end{align*}
If we can find for each $i\in\{1,2,3\}$, $\min_{S\in \CF_i}\sum_{e\in \delta(S)}w(e)$, in linear time,
we can solve the bond problem on the wheel graph in linear time.
Also note that the sets $\CF_1$ and $\CF_2$ are of the same kind, just {\em rotated};
thus, it suffices to describe an algorithm for finding the optimal bond from $\CF_1$,
and from $\CF_3$.

\subsection{Finding the optimal bond from $\CF_1$}
For each $k\in \{1,n-1\}$, we define the following quantities; for most of the quantities we
introduce two names - a full name, indicating its meaning, and an abbreviation:
\begingroup
\addtolength{\jot}{-1em}
\begin{align*}
&\absf(k) = \osf(k)=\max\{b_{i-1}+b_{j}+\sum_{l=i}^{j}a_l\ : \ i\in[1,k],j\in[i,k]\} \\
&\absfl(k) = \osfl(k)= \\
 & \hspace{1.2cm} \min\bigl\{i\in[1,k]  \ :  \ \absf(k)=\max\{b_{i-1}+b_{j}+\sum_{l=i}^{j}a_l\ : \ j\in[i,k]\}\bigr\} \\
&\absfr(k) = \osfr(k)= \\
 & \hspace{1.2cm} \min\{j\in[\absfl(k),k] \ : \ \absf(k)=b_{\absfl(k)-1}+b_{j}+\sum_{l=\absfl(k)}^{j}a_l \} \\
&\asb(k) = \po(k)=\max\{b_{i-1}+b_{k}+\sum_{l=i}^{k}a_l\ : \ i\in[1,k]\} \\
&\asbl(k)=\pol(k)=\min\{i\in[1,k]\ : \ \asb(k)=b_{i-1}+b_{k}+\sum_{l=i}^{k}a_l\}
\end{align*}
\endgroup
In words, $\osf(k)$ is the cost of the maximum bond that cuts out a subpath
of $\{1,2,\ldots,k\}$, and 
$\osfl(k)$ and $\osfr(k)$ are the indices $i$ and $j$
for which the maximum value $\osf(k)$ is attained; as there might be more 
such indices, we pick the smallest ones. 
Similarly, $\po(k)$ is the cost of the maximum bond that cuts out a subpath
of $\{1,2,\ldots,k\}$ ending in $k$, and the number $\pol(k)$ is the index
of the vertex in which the subpath of the cost $\po(k)$ starts.
Note that $\osf(n-1)$ is the cost of the maximal bond from the set $\CF_1$ and that it
consists of edges $\{\{\osfl(n-1)-1,\osfl(n-1)\},\{\osfr(n-1),$ $\osfr(n-1)+1\} \}\cup 
\bigcup_{l=\absfl(n-1)}^{\absfr(n-1)}\{l,c\}$.

The quantities can be computed in linear time using dynamic programming 
by Procedure \textsc{Best-Bond-1}.
\begin{algorithm}[tbh]
\caption{\textsc{Best-Bond-1}}
{{\begin{algorithmic}[1]
    \State $\osf(1) \gets b_{0 }+b_1+a_1, \po(1)\gets\osf(1)$ 
    \State $\osfl(1) \gets 1,\ \osfr(1) \gets 1, \ \pol(1) \gets 1$
    \For{$j=1,\ldots,n-2$}
    	\State $\oc \gets \po(j)+a_{j+1}-b_j+b_{j+1}$
	\If{$ \osf(j)\geq \oc $}
		\State $\osf(j+1) \gets \osf(j)$
		\State $\osfl(j+1) \gets \osfl(j)$
		\State $\osfr(j+1) \gets \osfr(j)$
	\Else
		\State $\osf(j+1) \gets \oc$
		\State $\osfl(j+1) \gets \pol(j)$
		\State $\osfr(j+1) \gets j+1$
    	\EndIf
   	\If{$ \po(j) < 0 $}
		\State $\po(j+1) \gets a_{j+1}-b_j+b_{j+1}$
		\State $\pol(j+1) \gets j+1$
	\Else
		\State $\po(j+1) \gets \po(j)+a_{j+1}-b_j+b_{j+1}$
		\State $\pol(j+1) \gets \pol(j)$
    	\EndIf
    \EndFor
    \State \textbf{return}$\left(\osf(n-1),\osfl(n-1),\osfr(n-1)\right)$
\end{algorithmic}}}
\end{algorithm}
The correctness of the procedure is ensured by the following Claim.
\begin{claim}
For every $k=1,\ldots,n-1$, the values computed by the Procedure \textsc{Best-Bond-1}
are the correct values for $\absf(k), \absfl(k), \absfr(k), \asb(k), \asbl(k)$.
\end{claim}
\begin{proof}
The proof is by induction on $k$. For $k=1$, there is only one possibility for the bond
that corresponds to a subpath of $\{1\}$, namely the bond $S=\{1\}$. Thus, the correct values
of the quantities are $\absf(1)=\asb(1)=b_0 + b_1 + a_1, \absfl(1)=\absfr(1)=\asbl(1)=1$
which is exactly what the procedure computes in steps 1 and 2.

Inductive step. We assume that the procedure correctly computed all the values up to
the index $k$ and we want to prove that the values with index $k+1$ are computed correctly 
as well. We start with the values $\absf, \absfl, \absfr$.
Let $(S,V\setminus S)$ be the lexicographically smallest maximum bond, where $S=\{i,\ldots,j\}\subseteq \{1,\ldots,k+1\}$. Then we have that either $j\leq k$, or $j=k+1$. In the first case,
$\absf(k+1)=\absf(k)$, $\absfl(k+1)=\absfl(k)$, and $\absfr(k+1)=\absfr(k)$, and the 
procedure computes these values in steps 6-8. In the second case, $\absf(k+1)=\asb(k)+a_{k+1}-b_k+b_{k+1}$, $\absfl(k+1)=\po(k)$, $\absfr(k+1)=k+1$, and the procedure computes these values
in steps 4 and 10-12.

Consider now the values $\asb$ and $\asbl$. We distinguish two cases:
$\asb(k)<0$, and $\asb(k)\geq 0$. In the first case, the maximum bond of the desired form
consists of the vertex $k+1$ only, and the correct values are $\asb(k+1)=a_{k+1}-b_k+b_{k+1}$
and $\asbl(k+1)=k+1$ which is what the procedure computes in steps 13-15. In the other case,
the maximum bond is of the form $S=\{\asbl(k),\ldots,k+1\}$ and of cost $\asbl(k)+a_{k+1}-b_{k}+k_{k+1}$; the procedure computes these values in steps~17-18.
\qed
\end{proof}

\subsection{Finding the optimal bond from $\CF_3$}
For each $k\in [1,n-2]$, we define the following quantities:
\begingroup
\addtolength{\jot}{-1.2em}
\begin{align*}
&\pbr(k)=\max\{b_{j}+\sum_{l=1}^{j}a_l\ : \ j\in[1,k]\} \\
&\pbri(k)=\min\{j\in[1,k] \ : \ \pbr(k)=b_{j}+\sum_{l=1}^{j}a_l\} \\
&\prk(k)=b_k+\sum_{l=1}^{k}a_l
\end{align*}
and for each $k\in [3,n]$, we define and then backwards calculate the following 
ones:
\begin{align*}
&\pbl(k)=\max\{b_{k-1}+\sum_{l=j}^{n-1}a_l + a_0\ : \ j\in[k,n]\} \hspace{2.6cm} \\
&\pbli(k)=\min\{j\in[k,n]  : \pbl(k)=b_{j-1}+\sum_{l=j}^{n-1}a_l + a_0\} \\
&\plk(k)=b_{k-1}+\sum_{l=k}^{n-1}a_l    \hspace{6.6cm}
\end{align*}
\endgroup
Similarly as before, these quantities can be computed in linear time using dynamic
programming by Procedure \textsc{Best-Bond-3}. 

\floatname{algorithm}{Procedure}
\begin{algorithm}[h]
\caption{\textsc{Best-Bond-3}}
\begin{algorithmic}[1]
    \State $\pbr(1) \gets b_{1}+a_1, \prk(1)\gets\pbr(1)$
    \State $\pbri(1) \gets 1$ 
    \For{$j=1,\ldots,n-3$}
        \State $\oc \gets \prk(j)+a_{j+1}-b_j+b_{j+1}$
        \If{$ \pbr(j)\geq \oc $}
                \State $\pbr(j+1) \gets \pbr(j)$
                \State $\pbri(j+1) \gets \pbri(j)$
        \Else
                \State $\pbr(j+1) \gets \oc$
                \State $\pbri(j+1) \gets j+1$
        \EndIf
	\State $\prk(j+1) \gets \prk(j)+a_{j+1}-b_j+b_{j+1}$
    \EndFor
    \State $\pbl(n) \gets b_{n-1}+a_0, \plk(n)\gets\pbl(n)$
    \State $\pbli(n) \gets n$ 
    \For{$j=n,\ldots,3$}
        \State $\oc \gets \plk(j)+a_{j-1}-b_{j-1}+b_{j-2}$
        \If{$ \pbl(j)\geq \oc $}
                \State $\pbl(j-1) \gets \pbl(j)$
                \State $\pbli(j-1) \gets \pbli(j)$
        \Else
                \State $\pbl(j-1) \gets \oc$
                \State $\pbli(j-1) \gets j-1$
        \EndIf
	\State $\plk(j-1) \gets \plk(j)+a_{j-1}-b_{j-1}+b_{j-2}$
    \EndFor
    \State $\bestsol \gets \pbr(2)+\pbl(2)$
    \For{$j=3,\ldots,n-1$}
	\State $\oc \gets \pbr(j)+\pbl(j)$
        \If{$ \bestsol < \oc $}
		\State $\bestsol \gets \oc$
        \EndIf
    \EndFor
    \State \textbf{return}$(\bestsol)$
\end{algorithmic}
\end{algorithm}

We observe two things: for every bond $S$ of type $3$, there exists a vertex $i\in\{2,\ldots,n-1\}$
such that $i$ does not belong to the path that is cut off by the bond $S$. If $S$ is the optimal
bond of type 3 and $i$ is the vertex not belonging to the path cut off by it, then the cost
of $S$ equals $\pbr(i)+\pbl(i)$. Thus, the cost of the optimal bond of type 3 is 
\[\max\left\{\pbr(i)+\pbl(i)  :  i\in \{2,\ldots,n-1\}\right \}\ .\]

To obtain the optimal bond, we compare the weights (costs) of the trivial bond and the 
optimal bonds of types 1, 2, and 3, and pick as our solution the best one. 
Note that the total running time is $\Oh{n}$.\qed
\end{proof}

Combining Theorem~\ref{thm:wheel} with the algorithm of Chimani et al.~\cite{CJN:23}
(cf. Theorem~\ref{thm:decomp} and Lemma~\ref{lem:2sum-minus}), 
yields the linear time algorithm for $(K_5\setminus e)$-minor-free graphs
(Corollary~\ref{cor:algorithm}).
For the sake of completeness, below we provide a complete description of the 
algorithm, building on the presentation of Chimani et al.~\cite{CJN:23}. 
Let $\MB(G)$ denote the size of the maximum bond in $G$, and given two vertices $u,v$
from $G$, let $\MB^{uv}(G)$ 
($\MB^u_v(G)$, resp.) denote the size of the 
maximum bond of $G$ in which the vertices $u,v$ are on the same side (on the
opposite sides, resp.) of the bond. 

We start by observing that given an algorithm for $\MB(G)$ running in time $p(|G|)$,
for every edge $uv\in E(G)$ we can construct $\MB^{uv}(G)$ in time $p(|G|)$, 
and the same holds for $\MB^u_v(G)$;
in the first case we let the algorithm construct $\MB(G')$ where $G'$ is the graph
obtained from $G$ by changing the weight of the edge $uv$ to $w(uv)=\sum_{e\in E}w(e)$,
and in the second case to $w(uv)= - \sum_{e\in E}w(e)$.

We proceed with a technical lemma. 
\begin{lemma}\label{lem:algorithm}
Let $G_1=(V_1,E_1)$ and $G_2=(V_2,E_2)$ be $2$-connected graphs such that $\{u,v\}= V_1\cap V_2$ and $\{uv\}= E_1\cap E_2$.
Then
\begin{align*}
\MB(G_1\ksum{uv}^{-}G_2) & =\max\{\MB^{uv}(G_1\setminus uv), \MB(G_2')\} \\
\MB(G_1\ksum{uv}G_2)    & =\max\{\MB^{uv}(G_1), \MB(\bar G_2)\} 
\end{align*}
where $G_2'$ and $\bar G_2$, resp., is the graph
obtained from $G_2$ by changing the weight of the edge $uv$ to $w(uv)=\MB^u_v(G_1\setminus uv)$ and to $w(uv)=\MB^u_v(G_1)$, resp.
\end{lemma}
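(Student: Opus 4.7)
My plan is to derive both identities from the bond decomposition in Lemma~\ref{lem:2sum-minus}: partition the bonds of each $2$-sum into three disjoint families, compute the maximum weight over each family, and then re-assemble two of these maxima as $\MB(G_2')$ or $\MB(\bar G_2)$ by exploiting the prescribed re-weighting of the shared edge $uv$.

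Since $G_1$ and $G_2$ are $2$-connected they are $2$-edge-connected, so $G_1\setminus uv$ and $G_2\setminus uv$ are both connected. Hence Case~4 of Lemma~\ref{lem:2sum-minus} applies to $G_1\ksum{uv}^{-}G_2$ and Case~2 applies to $G_1\ksum{uv}G_2$, yielding
\[\CC(G_1\ksum{uv}^{-}G_2)=\CC_{\bar e}(G_1)\cup\CC_{\bar e}(G_2)\cup\bigl(\CC_{uv}(G_1\setminus uv)\utimes\CC_{uv}(G_2\setminus uv)\bigr)\]
and
\[\CC(G_1\ksum{uv}G_2)=\CC_{\bar e}(G_1)\cup\CC_{\bar e}(G_2)\cup\bigl(\CC_{e}(G_1)\utimes\CC_{e}(G_2)\bigr).\]
For a bond in $\CC_{\bar e}(G_i)$ the shared edge is absent from the cut, so the family's maximum is $\MB^{uv}(G_i\setminus uv)$ in the first identity and $\MB^{uv}(G_i)$ in the second. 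For a bond in the $\utimes$-product the cut is the set-union $F_1\cup F_2$ in which $uv$ appears at most once, and maximising the two factors independently gives $\MB^u_v(G_1\setminus uv)+\MB^u_v(G_2\setminus uv)$ in the first identity and $\MB^u_v(G_1)+\MB^u_v(G_2)-w_{G_2}(uv)$ in the second (the subtraction correcting for the single shared copy of $uv$ in the sum graph).

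Finally, I re-express two of the three maxima as $\MB(G_2')$ or $\MB(\bar G_2)$ by splitting bonds of $G_2'$ (resp.\ $\bar G_2$) according to whether they contain $uv$. Bonds with $u,v$ on the same side do not contain $uv$ and have unchanged weight, contributing $\MB^{uv}(G_2)$, which matches the $\CC_{\bar e}(G_2)$ term. Bonds with $u,v$ on opposite sides correspond bijectively via $F\leftrightarrow F\setminus\{uv\}$ to bonds in $\CC^u_v(G_2\setminus uv)$, and under the new weight of $uv$ they carry weight $w_{G_2}(F\setminus\{uv\})+\MB^u_v(G_1\setminus uv)$ in $G_2'$ (resp.\ $w_{G_2}(F\setminus\{uv\})+\MB^u_v(G_1)$ in $\bar G_2$); maximising over $F$ gives $\MB^u_v(G_2\setminus uv)+\MB^u_v(G_1\setminus uv)$ (resp.\ $\MB^u_v(G_2)-w_{G_2}(uv)+\MB^u_v(G_1)$), which matches the $\utimes$-product maximum computed above. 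Hence $\MB(G_2')$ (resp.\ $\MB(\bar G_2)$) collapses the last two family-maxima into a single quantity, and adjoining the first family's maximum yields each identity. The main delicate point I expect is the single-counting of the shared edge $uv$: the re-weighting of $G_2$ is engineered precisely so that substituting $\MB^u_v(G_1\setminus uv)$ (resp.\ $\MB^u_v(G_1)$) for $w_{G_2}(uv)$ absorbs $G_1$'s best $u$-$v$-separating bond into the bond problem on $G_2$, merging two of the three family-maxima into one.
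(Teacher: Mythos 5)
Your overall route is the same as the paper's: decompose the bonds of the sum via Lemma~\ref{lem:2sum-minus} (Case~4 for $\ksum{uv}^{-}$, Case~2 for $\ksum{uv}$), maximise over the three families, and fold the two $G_2$-side families into the re-weighted graph $G_2'$ (resp.\ $\bar G_2$). Your re-assembly step is sound and in fact more explicit than the paper's: the bijection $F\leftrightarrow F\setminus\{uv\}$ between $u$--$v$-separating bonds of $G_2$ and of $G_2\setminus uv$, and the $-w_{G_2}(uv)$ correction for the kept-edge sum, are exactly the right bookkeeping, and your treatment of the second identity is correct.

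There is, however, a genuine gap in the first identity, at the very first family. You assert that the maximum over $\CC_{\bar e}(G_1)$ (with $e=uv$) is $\MB^{uv}(G_1\setminus uv)$ ``because the shared edge is absent from the cut.'' That inference does not follow: $\CC_{\bar e}(G_1)$ consists of bonds of $G_1$ \emph{with the edge $uv$ present}, and such a bond need not be a bond of $G_1\setminus uv$, because deleting $uv$ can disconnect the side containing both $u$ and $v$ (namely when $uv$ is a bridge of that induced subgraph). In general only $\MB^{uv}(G_1\setminus uv)\leqslant\MB^{uv}(G_1)$ holds, and the gap can dominate everything else. Concretely, let $G_1$ be the $4$-cycle on $u,a,b,v$ with edges $uv,ua,ab,bv$ and $w(ua)=w(bv)=10$, $w(ab)=1$, and let $G_2$ be the triangle on $u,v,x$ with unit weights. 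Then $G_1\ksum{uv}^{-}G_2$ is a $5$-cycle in which $\{ua,bv\}$ (sides $\{a,b\}$ and $\{u,v,x\}$) is a bond of weight $20$; yet every bond of the path $G_1\setminus uv$ separates $u$ from $v$, so the family you label $\MB^{uv}(G_1\setminus uv)$ is empty, $\MB^u_v(G_1\setminus uv)=10$, and $\MB(G_2')=11$. The quantity the family $\CC_{\bar e}(G_1)$ actually contributes is $\MB^{uv}(G_1)$ ($=20$ here), and with $\MB^{uv}(G_1)$ in place of $\MB^{uv}(G_1\setminus uv)$ your family-by-family computation goes through (the analogous slip for $i=2$ is harmless, since your folding step correctly charges those bonds at weight $\MB^{uv}(G_2)$ inside $\MB(G_2')$). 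For what it is worth, the paper's own proof makes the same silent identification -- its case a) speaks of bonds of $G_1\setminus uv$, whereas Lemma~\ref{lem:2sum-minus}, Case~4, delivers $\CC_{\bar e}(G_1)$ -- so you have reproduced the paper's argument including its weak spot; but as a standalone proof this step is unjustified, and the example above shows it cannot be repaired without replacing $\MB^{uv}(G_1\setminus uv)$ by $\MB^{uv}(G_1)$ in the first displayed equality.
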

\begin{proof}
Let $G=G_1\ksum{uv}^{-}G_2$ and let $F$ be a maximum bond in $G$. By Lemma~\ref{lem:2sum-minus}, Cases~3 and 4,
    \begin{itemize}
        \item[a)] $F$ is a bond of $G_1\setminus uv$ with $u$ and $v$ on the same side, or
        \item[b)] $F$ is a bond of $G_2\setminus uv$ with $u$ and $v$ on the same side, or
        \item[c)] $F\cap E(G_i)$ is a bond of $G_i\setminus uv$ with $u$ and $v$ on different sides, for both $i\in\{1,2\}$.
       \end{itemize}
Thus, in case a), $\MB(G)=\MB^{uv}(G_1\setminus uv) \geq \MB(G_2')$. 
In case~b), $\MB(G)=\MB^{uv}(G_2\setminus uv) = \MB(G_2') \geq \MB^{uv}(G_1\setminus uv)$.
In case c), $\MB(G)=\MB^{u}_v(G_1\setminus uv) + \MB^{u}_v(G_2\setminus uv) = \MB(G_2')
 \geq \MB^{uv}(G_1\setminus uv)$.
In all three cases, we have the desired equality
 \begin{align*}
\MB(G_1\ksum{uv}^{-}G_2) & =\max\{\MB^{uv}(G_1\setminus uv), \MB(G_2')\} \ .
\end{align*}

If $G=G_1\ksum{uv}G_2$, we proceed in a similar way, using the fact 
that by Lemma~\ref{lem:2sum-minus}, Case~2, $F$ is a bond in $G$ if and only if 
\begin{itemize}
        \item[a)] $F$ is a bond of $G_1$ with $u$ and $v$ on the same side, or
        \item[b)] $F$ is a bond of $G_2$ with $u$ and $v$ on the same side, or
        \item[c)] $F\cap E(G_i)$ is a bond of $G_i$ with $u$ and $v$ on different sides, for both $i\in\{1,2\}$. \qed
\end{itemize}
\end{proof}

\begin{corollary}\label{cor:algorithm}
The \MaxBond problem can be solved for any $(K_5\setminus e)$-minor-free graph in time $O(n)$.
\end{corollary}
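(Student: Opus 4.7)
The plan is to combine three ingredients already developed above: the tree decomposition of Theorem~\ref{thm:decomp}, the wheel algorithm of Theorem~\ref{thm:wheel}, and the bond-reduction identities of Lemma~\ref{lem:algorithm} (supplemented by Case~1 of Lemma~\ref{lem:2sum-minus} for $\oplus_1$). First I would apply Theorem~\ref{thm:decomp} to obtain, in $O(n)$ time, a decomposition $G=G_1\oplus^1\cdots\oplus^{\ell-1}G_\ell$ in which each $G_i$ is a wheel, $\prism$, $K_2$, $K_3$, or $K_{3,3}$. This decomposition induces a tree $T$ whose nodes are the 3-connected pieces and whose edges correspond to the $k$-sum operations; I would root $T$ arbitrarily.

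For the $\oplus_1$ operations, Case~1 of Lemma~\ref{lem:2sum-minus} gives $\MB(G_1\ksum{1}G_2)=\max\{\MB(G_1),\MB(G_2)\}$, so it suffices to solve the problem on each biconnected block separately. Inside one such block only $\oplus_2$ and $\oplus_2^-$ operations appear, and I would run a bottom-up tree dynamic program on the corresponding subtree of $T$, maintaining at every node $v$ with parent virtual edge $e_v=u_vw_v$ the two numbers $\alpha_v=\MB^{e_v}(H_v)$ and $\beta_v=\MB^{u_v}_{w_v}(H_v)$, where $H_v$ is the subgraph of $G$ represented by the subtree rooted at $v$. By Lemma~\ref{lem:algorithm}, these are exactly the quantities needed in order to absorb $H_v$ into its parent 3-connected piece.

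At an internal node $v$ with children $c_1,\dots,c_k$, I would form a modified component $C_v'$ from the 3-connected piece $C_v$ by replacing the weight of each virtual edge $e_{c_i}$ by $\beta_{c_i}$, and then set $\beta_v=\MB^{u_v}_{w_v}(C_v')$ and $\alpha_v=\max\{\MB^{e_v}(C_v'),\max_i\alpha_{c_i}\}$. The first term of $\alpha_v$ captures bonds of $H_v$ that cut at least one edge of $C_v$, so that each child appears only through its reweighted virtual edge exactly as prescribed by Lemma~\ref{lem:algorithm}; the second term captures bonds lying entirely inside a single child subtree. Each of $\MB^{e_v}(C_v')$ and $\MB^{u_v}_{w_v}(C_v')$ is obtained by the observation preceding Lemma~\ref{lem:algorithm}, namely by temporarily setting the weight of $e_v$ in $C_v'$ to a sufficiently large negative or positive number and calling the unconstrained MaxBond routine; this routine is Theorem~\ref{thm:wheel} when $C_v'$ is a wheel, and is trivial for the constant-size pieces. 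Since each node costs $O(|C_v|)$ and the sizes of the 3-connected pieces sum to $O(n)$, the total running time is $O(\sum_v|C_v|)=O(n)$. The main obstacle I expect is the bookkeeping for $\oplus_2^-$ operations, where $e_v$ is not an edge of $G$: one has to distinguish Cases~3 and~4 of Lemma~\ref{lem:2sum-minus}, treat $e_v$ as temporarily present in $C_v$ with weight $0$, and verify that the same $(\alpha_v,\beta_v)$ recurrence still produces $\MB^{e_v}$ and $\MB^{u_v}_{w_v}$ of $H_v$ with respect to the true edge set of $G$. Once this case analysis is verified, correctness follows by a routine induction on the height of $T$.
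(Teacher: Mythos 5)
Your proposal is correct and follows essentially the same route as the paper: Theorem~\ref{thm:decomp} for the decomposition, Case~1 of Lemma~\ref{lem:2sum-minus} to reduce to biconnected blocks, the large-weight trick to get $\MB^{uv}$ and $\MB^u_v$ from an unconstrained solver, Lemma~\ref{lem:algorithm} to fold pieces across $\oplus_2$/$\oplus_2^-$ by re-weighting the virtual edge, and Theorem~\ref{thm:wheel} for the wheel pieces. The only difference is organizational -- you phrase it as a rooted-tree dynamic program carrying the pair $(\alpha_v,\beta_v)$ per node, whereas the paper peels off one piece at a time by induction on $\ell$ -- and your flagged $\oplus_2^-$ bookkeeping is exactly what Lemma~\ref{lem:algorithm} already settles.
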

\begin{proof}
First, we prove the claim for $2$-connected graphs. For a $2$-connected $(K_5\setminus e)$-minor-free graph $G=(V,E)$, 
by Theorem~\ref{thm:decomp}, we construct in linear time its decomposition
$G=G_1 \oplus^1 \dots \oplus^{l-1} G_\ell$ where each
$G_i$ is isomorphic to a wheel graph, $\prism$, $K_3$, or $K_{3,3}$,
and each operation $\oplus^i$ is $\oplus_2$ or $\oplus_2^-$.

By induction on $l$ we show the following: there exists a constant $c>0$ such that
given a decomposition of $G$ into
$G=G_1 \oplus^1 \dots \oplus^{l-1} G_\ell$ where each $G_i$ is isomorphic to 
a wheel graph, $\prism$, $K_3$, or $K_{3,3}$,
and each operation $\oplus^i$ is $\oplus_2$ or $\oplus_2^-$,
it is possible to compute $\MB(G)$ in time at most $2\cdot c\cdot \sum_{i=1}^l |V(G_i)|$. Since, $\sum_{i=1}^l |V(G_i)|=|V(G)|+2(\ell-1)$ and $\ell\leqslant |V(G)|$ we have $\sum_{i=1}^l |V(G_i)|\leqslant 3\cdot|V(G)|-2$ and hence the upper bound on the running time will follow.

If $l=1$, then $G$ is a wheel graph $W_n$, Prism, $K_3$ or
$K_{3,3}$; as each of them, except for $W_n$, is a constant size graph, and for 
the wheel graph $W_n$,
$\MB(W_n)$ can be computed in linear time by Theorem~\ref{thm:wheel}, 
we conclude, considering our initial observation of this section, that there
exists a constant $c>0$ such that for any $G$ of the graphs listed
in the previous sentence and any $uv\in E(G)$, 
computing $\MB^{uv}(G)$ and $\MB^u_v(G)$ takes at most time $c\cdot |V(G)|$.

If $\ell \geq 2$, let 
$H=G_2 \oplus^2 \dots \oplus^{l-1} G_\ell$. We distinguish two cases: 
$\oplus^1 = \oplus_{uv}^-$ and $\oplus^1 = \oplus_{uv}$. In the first case,
let $H'$ be the graph obtained from $H$ by changing the weight of the edge $uv$ to $w(uv)=\MB^u_v(G_1\setminus uv)$; note that $H'$ has the
same decomposition as $H$, they differ only in the weight of the edge $uv$. 
Thus, by the inductive assumption, we can compute $\MB(H')$ in time 
$2\cdot c \cdot{\sum_{i=2}^l |V(G_i)|}$, and $\MB^{uv}(G_1)$ and $\MB^u_v(G_1)$ in time
$c\cdot |V(G_1)|$.
By Lemma~\ref{lem:algorithm}, 
\[\MB(G)=\MB(G_1\ksum{uv}^{-}H)  =\max\{\MB^{uv}(G_1\setminus uv), \MB(H')\}\ ,\]
therefore we can compute $\MB(G)$ from $\MB^{uv}(G_1\setminus uv)$ and $\MB(H')$ in
time $\Oh{1}$. Note that the time to construct $H'$ given $H$, $uv$ and $\MB^{u}_v(G_1)$, 
is $\Oh{1}$. Thus, exploiting the inductive assumption, we can compute $\MB(G)$ in time
$c\cdot |V(G_1)| + 2\cdot c\cdot \sum_{i=2}^l |V(G_i)| + \Oh{1} \leq 2\cdot c\cdot \sum_{i=1}^l |V(G_i)|$ which
completes the proof of the inductive step in the first case.

If $\oplus^1 = \oplus_{uv}$, we proceed analogously, exploiting the other equality
of Lemma~\ref{lem:algorithm}.

Finally, if the graph is not $2$-connected, 
we compute in linear time a decomposition of $G$ into $2$-connected components~\cite{HopcroftT:73}, 
construct the maximum bond for each of them in linear time, 
and output the largest of them; the total running time
will be $\Oh{|V(G)|}+\sum_{H\in \CC}\Oh{|V(H)|}=\Oh{|V(G)|}$ where $\CC$ is the set of $2$-connected 
components of $G$.
\qed
\end{proof}

\section{Concluding Remarks}
Our main result concerning $k$-sums for $k=1,2$ can be used in a natural way to get explicit descriptions of the bond polytope of the resulting graph. Let $G=G_1\ksum{1}G_2.$ Then Lemma \ref{lem:direct-sum-facets} and Lemma \ref{lem:1sum-vertices} allow us to explicitly obtain the inequalities describing \bondp{G} since it is just a subdirect sum of the two polytopes \bondp{G_1} and \bondp{G_2}. 
Unfortunately, the number of inequalities is not additive and this cannot be avoided unless one constructs extended formulations, as we do.

One can also construct the inequalities describing \bondp{G_1\ksum{2}G_2} first by constructing the extended formulation in Theorem \ref{thm:2sum_xc} and then projecting out the additional coordinates that were added. Since there is only a constant
number of extra coordinates that need to be projected out, this can be done in polynomial time. However, since projecting out a single coordinate may asymptotically square the number of inequalities, generally speaking one would have neither a linear size description nor a linear (in output size) time construction. We leave it as an open problem whether one can describe the bond polytope under $2$-sums explicitly.
\begin{open}
    Let $G=G_1\ksum{e}G_2$ be obtained by a $2$-sum of $G_1$ and $G_2.$ 
    What is the description of $\bondp{G}$ (possibly in terms of the inequalities describing $\bondp{G_1}$ and $\bondp{G_2}$)?
\end{open}

Finally, for the $2$-sum operation where the common edge is removed, we believe that the extension complexity of $\bondp{G_1\ksum{2}^-G_2}$ is not additive in the extension complexities of $\bondp{G_1}$ and $\bondp{G_2}.$ This is because in Lemma \ref{lem:2sum_removed_xc_cases_1_2} we need the bond polytopes not of the summand graphs but of their subgraphs. We leave this as another open problem.

\begin{open}
    Let $G=G_1\ksum{e}^{-}G_2$ be obtained by a $2$-sum of $G_1$ and $G_2$ and removing the common edge $e.$ What is the description of $\bondp{G}$ (possibly in terms of the inequalities describing $\bondp{G_1}$ and $\bondp{G_2}$)?
\end{open}

\section*{Acknowledgment}
We would like to thank the anonymous referee for pointing out the connection between bonds and circuits of co-graphic matroids and for several other useful suggestions. 

\bibliographystyle{abbrv}
\bibliography{bond}

\end{document}